%
%
%
%

\documentclass[11pt]{amsart}

\usepackage[latin1]{inputenc}
\usepackage{amsmath,amsfonts,amssymb}
\usepackage{amscd}
\usepackage{graphicx}

\setlength{\textwidth}{16cm} \setlength{\textheight}{24.2cm}
\setlength{\topmargin}{-1.5cm} \setlength{\oddsidemargin}{-1mm}
\setlength{\evensidemargin}{-1mm}
\setlength{\abovedisplayskip}{3mm}
\setlength{\belowdisplayskip}{3mm}
\setlength{\abovedisplayshortskip}{0mm}
\setlength{\belowdisplayshortskip}{2mm} \normalbaselines
\raggedbottom

\newcommand{\rad}{\operatorname{rad}}

\newcommand{\soc}{\operatorname{soc}}

\newtheorem{theorem}{Theorem}[section]
\newtheorem{prop}[theorem]{Proposition}

\newtheorem{lemma}[theorem]{Lemma}

\newtheorem{remark}[theorem]{Remark}

\begin{document}

\title[Generalized Reynolds ideals]
{Generalized Reynolds ideals and derived equivalences for
algebras of dihedral and semidihedral type}

\author{Thorsten Holm \and Alexander Zimmermann}

\address{~~\newline
Thorsten Holm\newline
Leibniz Universit\"at Hannover, 
Institut f\"ur Algebra, Zahlentheorie und Diskrete Mathematik,
Welfengarten 1, 30167 Hannover, Germany}
\email{holm@math.uni-hannover.de}
\urladdr{http://www.iazd.uni-hannover.de/\~{ }tholm}

\address{
~~\newline Alexander Zimmermann\newline  Universit\'{e} de
Picardie, Facult\'e de Math\'ematiques et LAMFA (UMR 6140 du
CNRS), 33 rue St Leu, 80039 Amiens CEDEX 1, France}
\email{alexander.zimmermann@u-picardie.fr}
\urladdr{http://www.lamfa.u-picardie.fr/alex/azimengl.html}

\thanks{The authors are supported by a German-French grant DAAD-PROCOPE,
DAAD grant number D/0628179, and
``partenariat Hubert Curien PROCOPE dossier 14765WB'' respectively. We
gratefully acknowledge the financial support which made the present work
possible.}

\keywords{Algebras of dihedral and semidihedral type;
Blocks of finite groups; Tame representation type;
Derived equivalences; Generalized Reynolds ideals}

\subjclass[2000]{Primary: 16G10, 20C05;
Secondary: 18E30, 16G60}

\bigskip

\begin{abstract}
Generalized Reynolds ideals are ideals of the center
of a symmetric algebra over a field of positive characteristic.
They have been shown by the second author to be invariant
under derived equivalences.
In this paper we determine the generalized Reynolds ideals of
algebras of dihedral and semidihedral type (as defined by Erdmann),
in characteristic 2. In this way we solve
some open problems about scalars occurring in the
derived equivalence classification of
these algebras.
\end{abstract}

\maketitle


\section{Introduction}

Finite-dimensional algebras are distinguished according to their
representation type, which is either finite, tame or wild. For
blocks of group algebras the representation type is determined by
the structure of the defect group. It is finite if and only if the
defect groups are cyclic. The structure of such blocks is known;
in particular these algebras are Brauer tree algebras. Blocks of
tame representation type occur only in characteristic 2, and then
precisely if the defect groups are dihedral, semidihedral or
generalized quaternion. The structure of such blocks has been
determined in a series of seminal papers by K. Erdmann
\cite{Erdmann}. She introduced the more general classes of
algebras of dihedral, semidihedral and quaternion type, and
classified them by explicitly describing their basic algebras by
quivers and relations. However, some subtle questions remained
open in her classification, most of them related to scalars
occurring in the relations. Based on Erdmann's Morita equivalence
classification, algebras of dihedral, semidihedral and quaternion
type have been classified up to derived equivalence by the first
author in \cite{H-tame}, \cite{H-derived}. Along the way, some of
the subtle remaining problems in \cite{Erdmann} have been solved,
but not all. In particular, for the case of two simple modules
still scalars occur in the relations, and it could not be decided
whether the algebras for different scalars are derived equivalent,
or not. (See the appendix of \cite{H-habil} for tables showing
the status of the derived equivalence classifications.)

In this paper, we shall study new invariants for symmetric algebras
$A$ over fields of positive characteristic
which have been defined in \cite{HHKM}. These are descending
sequences of so-called generalized Reynolds ideals, of the center,
$$Z(A)\supseteq T_1(A)^{\perp}\supseteq T_2(A)^{\perp}\supseteq
\ldots \supseteq T_n(A)^{\perp}\supseteq\ldots .$$
The precise definition of these ideals
is given in Section \ref{Sec-Reynolds}
below.

It has been
shown by the second author in \cite{Z} that these sequences of ideals
are invariant under derived equivalences, i.e. any derived
equivalence implies an isomorphism between the centers mapping
the generalized Reynolds ideals onto each other.

It turns out that generalized Reynolds ideals can be very useful
for distinguishing algebras up to derived equivalence. For
instance, in \cite{HS}, generalized Reynolds ideals have been used
successfully to complete
the derived equivalence classification of symmetric algebras
of domestic representation type.

In this paper,
we are going to compute the generalized Reynolds ideals for algebras
of dihedral and semidihedral type. As main application we will
settle some of the
scalar problems which remained open in the derived equivalence
classification \cite{H-tame}, \cite{H-derived}.

Using the notation of \cite{Erdmann}, our results can be summarized
as follows. The definitions of the algebras under consideration
are also recalled below in Sections \ref{sec-dihedral} and
\ref{sec-semidihedral}, respectively.

\begin{theorem}
\label{thm-intro-dihedral}
Let $F$ be an algebraically closed field of characteristic 2.
For any given integers $k,s\ge 1$
consider the algebras of dihedral type
$D(2A)^{k,s}(c)$ for the scalars $c=0$ and $c=1$.
Suppose that if $k=2$ then $s\ge 3$ is odd, and if $s=2$ then
$k\ge 3$ is odd.

Then the algebras $D(2A)^{k,s}(0)$ and $D(2A)^{k,s}(1)$
have different sequences of generalized
Rey\-nolds ideals. In particular, the algebras $D(2A)^{k,s}(0)$ and $D(2A)^{k,s}(1)$
are not derived
equivalent.
\end{theorem}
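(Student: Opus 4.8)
The plan is to reduce Theorem~\ref{thm-intro-dihedral} to an explicit finite computation inside the two basic algebras and then to read off the $F$-dimensions of the members of the sequence of generalized Reynolds ideals, which will come out differently for $c=0$ and $c=1$. Recall the general mechanism behind these ideals (as set up in Section~\ref{Sec-Reynolds} following \cite{HHKM}): for a symmetric $F$-algebra $A$ with symmetrizing form $\langle\,,\,\rangle$, write $K(A)=[A,A]$ for the $F$-span of all commutators $ab-ba$. Since $\operatorname{char}F=p=2$, K\"ulshammer's lemma guarantees that the $p$-th power map descends to a well-defined, $p$-semilinear map $F_1\colon A/K(A)\to A/K(A)$, $\bar x\mapsto\overline{x^{2}}$, and $T_n(A)/K(A)=\ker\bigl(F_1^{\,n}\bigr)$. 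As $K(A)^{\perp}=Z(A)$ and the form is non-degenerate, passing to perpendicular spaces turns this ascending chain of subspaces into the descending chain of Reynolds ideals in $Z(A)$, and
$$\dim_F T_n(A)^{\perp}=\dim_F A-\dim_F T_n(A)=\dim_F\bigl(\operatorname{im}F_1^{\,n}\bigr).$$
Each of these numbers is preserved by any isomorphism of centres carrying the Reynolds ideals of one algebra onto those of the other, whereas the centre alone does not separate $D(2A)^{k,s}(0)$ from $D(2A)^{k,s}(1)$; this is exactly why the finer invariant of \cite{Z} is called for. It therefore suffices to exhibit an $n$ — it will turn out that already $n=1$ does it — with
$$\dim_F T_1\bigl(D(2A)^{k,s}(0)\bigr)^{\perp}\ \neq\ \dim_F T_1\bigl(D(2A)^{k,s}(1)\bigr)^{\perp},$$
for then no isomorphism of centres can match up the two sequences of ideals, and by \cite{Z} the algebras are not derived equivalent.

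Next I would set up the combinatorics. Starting from the quiver and the relations of $D(2A)^{k,s}(c)$ recalled in Section~\ref{sec-dihedral}, together with the path basis they provide, one observes that every path which is not a closed walk already lies in $K(A)$, and that two closed walks which are cyclic rotations of one another are congruent modulo $K(A)$; hence $A/K(A)$ has a basis indexed by the \emph{necklaces} — cyclic classes of closed walks surviving in the algebra — supplemented by the socle corrections forced by the relations. This produces an explicit basis of $A/K(A)$, and in parallel one fixes the symmetrizing form by pinning down the socle. The map $F_1$ can then be evaluated on this basis: the vertices $\bar e_0,\bar e_1$ are fixed, while a necklace represented by a closed walk $c$ is sent to the class of the doubled walk $cc$, which vanishes in $A$ as soon as going round twice meets a zero relation; in particular only the short necklaces survive $F_1$, and iterating kills almost everything.

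The scalar $c$ enters precisely one of the defining relations of $D(2A)^{k,s}(c)$ — the deformation term relating the two ``long'' paths near the socle — and this is the heart of the matter: changing $c$ from $0$ to $1$ alters which of the relevant short closed walks become zero in $A$, hence alters the image of $F_1$ on the corresponding necklaces, and under the stated restrictions on $k$ and $s$ it changes the rank of $F_1$ by one, which gives the required inequality of dimensions. The excluded configurations — the degenerate ones together with $k=1,s=2$ and $k=2,s=1$, and $k=2$ with $s$ even, and $s=2$ with $k$ even — are exactly those in which either the parameter $c$ can be rescaled away or the two rank computations coincide; there the Reynolds ideals do not, and need not, distinguish the algebras, and in the parity-exceptional cases $D(2A)^{k,s}(0)$ and $D(2A)^{k,s}(1)$ are in fact genuinely derived equivalent (see \cite{H-derived}).

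The main obstacle is the bookkeeping in the middle step: one must determine $K(A)$ — equivalently, the list of surviving necklaces and the socle contributions imposed by the relations — with complete accuracy, get the symmetrizing form exactly right, and then honestly track how the deformation parameter propagates through products of the long paths into the squaring map. None of this is conceptually deep, but it is delicate; the parity hypotheses on $k$ and $s$ are needed precisely to keep the resulting dimension count from accidentally agreeing for $c=0$ and $c=1$, and should the plain dimensions ever coincide one would fall back on a finer invariant of the ideal $T_1(A)^{\perp}$ (such as its square, or its intersection with the socle of $Z(A)$), which is again preserved by any matching isomorphism.
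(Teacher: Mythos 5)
Your overall framework is sound --- reducing to the $p$-semilinear squaring map on $A/K(A)$, noting that $\dim_F T_n(A)^{\perp}=\dim_F A-\dim_F T_n(A)$ is preserved by the isomorphism of \cite{Z}, and computing on an explicit path basis is exactly the right setting. But your central quantitative claim, that changing $c$ from $0$ to $1$ changes the rank of the squaring map by one so that already $\dim_F T_1(A^{k,s}_0)^{\perp}\neq\dim_F T_1(A^{k,s}_1)^{\perp}$, is false except when $k$ and $s$ are both odd. What you are missing is a compensation phenomenon in characteristic $2$: the scalar enters through $\alpha^2=c(\alpha\beta\gamma)^k$, and while for $c=1$ the element $\alpha$ itself leaves $T_1$, the elements $(\alpha\beta\gamma)^{k/2}$ (for $k$ even) and $\eta^{s/2}$ (for $s$ even) also square to the same socle class modulo $K(A)$, so sums such as $\alpha+(\alpha\beta\gamma)^{k/2}$ re-enter $T_1$ and the total dimension is unchanged. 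Concretely (Lemma \ref{lem-T-dihedral}), $\dim T_1(A^{k,s}_0)=\dim T_1(A^{k,s}_1)$ whenever at least one of $k,s$ is even; the pure dimension count proves the theorem only in the case $k,s$ both odd.

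For the remaining three parity cases the paper must, and does, use a finer invariant: since the isomorphism of centres carries $T_1(A^{k,s}_0)^{\perp}$ onto $T_1(A^{k,s}_1)^{\perp}$, it induces a ring isomorphism of the quotients $\overline{Z}_c:=Z(A^{k,s}_c)/T_1(A^{k,s}_c)^{\perp}$, and one checks that for $c=1$ the class of $(\beta\gamma\alpha)^{k-1}\beta\gamma$ lands in the square of the radical of $\overline{Z}_1$ (it becomes equal to $\eta^{s/2}$, respectively to $\eta^{s/2}$ plus central classes of degree $k/2$), whereas for $c=0$ it does not; hence $\dim\overline{J}_1/\overline{J}_1^2=2$ while $\dim\overline{J}_0/\overline{J}_0^2=3$. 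Your closing sentence does anticipate that a finer invariant of $T_1(A)^{\perp}$ might be needed, but you present it as a contingency rather than as the main body of the argument, and the specific candidates you name (the square of the ideal, its intersection with the socle of $Z(A)$) are not shown to do the job; the invariant that actually succeeds is the radical filtration of the quotient ring $Z(A)/T_1(A)^{\perp}$. A secondary error: in the excluded small-parameter cases the algebras are not known to be derived equivalent --- on the contrary, Kauer's methods show they are not --- so your assertion that they ``are in fact genuinely derived equivalent'' there is wrong.
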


The above result has also been obtained earlier by M. Kauer
\cite{Kauer}, \cite{KR}, using entirely different methods.
However, our new proof seems to be more elementary, just
using linear algebra calculations.

For algebras of semidihedral type, we can prove the following result using generalized
Reynolds ideals.

\begin{theorem}
\label{thm1-intro-semidihedral}
Let $F$ be an algebraically closed field of characteristic 2.
For any given integers $k,s\ge 1$, consider the algebras of
semidihedral type
$SD(2B)_1^{k,s}(c)$ for the scalars $c=0$ and $c=1$.
Suppose that if $k=2$ then $s\ge 3$ is odd, and if $s=2$ then
$k\ge 3$ is odd.

Then the algebras $SD(2B)_1^{k,s}(0)$ and $SD(2B)_1^{k,s}(1)$
have different sequences of generalized
Reynolds ideals. In particular, the algebras $SD(2B)_1^{k,s}(0)$ and
$SD(2B)_1^{k,s}(1)$ are not derived equivalent.
\end{theorem}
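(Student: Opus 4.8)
The plan is to carry out, for the semidihedral algebras $A=SD(2B)_1^{k,s}(c)$, exactly the kind of explicit computation of the generalized Reynolds ideals that underlies Theorem~\ref{thm-intro-dihedral}, and then to compare the outcome for the two scalars $c=0$ and $c=1$; since the hypotheses on $k$ and $s$ are the same in both theorems, the two arguments should run closely in parallel. First I would fix, from the quiver and relations of $SD(2B)_1^{k,s}(c)$ recalled in Section~\ref{sec-semidihedral}, an explicit monomial $F$-basis $\{b_1,\dots,b_d\}$ of $A$ (paths modulo the relations) together with a symmetrizing linear form $\lambda\colon A\to F$; concretely this means identifying $\soc(A)$ and writing the associative nondegenerate bilinear form $\langle x,y\rangle=\lambda(xy)$ in terms of the basis. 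All of this is routine, and the scalar $c$ enters only through a single relation.

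Next I would invoke the definition, recalled in Section~\ref{Sec-Reynolds}, of $T_n(A)=\{x\in A:x^{2^n}\in K(A)\}$ with $K(A)=[A,A]$. The characteristic $2$ hypothesis is decisive here: for any basis elements one has $b_ib_j+b_jb_i=b_ib_j-b_jb_i\in K(A)$, so for $x=\sum_i\mu_i b_i$ one gets $x^2\equiv\sum_i\mu_i^2 b_i^2\pmod{K(A)}$, and, iterating (using that $K(A)$ is closed under squaring in characteristic~$2$, which is part of the general theory recalled in Section~\ref{Sec-Reynolds}), $x^{2^n}\equiv\sum_i\mu_i^{2^n}b_i^{2^n}\pmod{K(A)}$. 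Since $F$ is perfect, the defining condition $x^{2^n}\in K(A)$ of $T_n(A)$ therefore turns into a genuinely linear condition on the vector $(\mu_i^{2^n})_i$, governed entirely by the images of the powers $b_i^{2^n}$ in $A/K(A)$. Computing these powers --- this is where the relations involving the scalar $c$ come into play --- gives $T_n(A)$ as an explicit $F$-subspace of $A$, and one further linear-algebra step with the form $\langle -,-\rangle$ produces $T_n(A)^{\perp}\subseteq Z(A)$.

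Finally I would compare the descending sequences $Z(A)\supseteq T_1(A)^{\perp}\supseteq T_2(A)^{\perp}\supseteq\cdots$ for $c=0$ and for $c=1$. Although the centers $Z(SD(2B)_1^{k,s}(0))$ and $Z(SD(2B)_1^{k,s}(1))$ are abstractly isomorphic, I expect the Reynolds ideals to sit differently inside them; the cleanest way to record the difference is to exhibit, for some small $n$ --- I would expect $n=1$ to suffice under the stated hypotheses --- a discrepancy between the two values of $c$ in some invariant of the pair $(Z(A),T_n(A)^{\perp})$, for instance $\dim_F T_n(A)^{\perp}$, or the position of $T_n(A)^{\perp}$ relative to $\soc(Z(A))$ and $\rad(Z(A))$. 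The parity hypotheses (if $k=2$ then $s\ge 3$ odd, and if $s=2$ then $k\ge 3$ odd) are needed precisely to rule out the small degenerate cases in which the Frobenius-twisted linear systems above happen, coincidentally, to have the same solution space for $c=0$ and for $c=1$. Once the sequences are seen to differ, the main result of \cite{Z} --- that a derived equivalence of symmetric algebras induces an isomorphism of centers matching up the generalized Reynolds ideals --- immediately gives that $SD(2B)_1^{k,s}(0)$ and $SD(2B)_1^{k,s}(1)$ are not derived equivalent. The principal obstacle is the bookkeeping in the middle step: correctly computing the higher powers $b_i^{2^n}$ modulo $K(A)$, pinning down $K(A)$ itself, and locating the exact $n$ and central element at which the dependence on $c$ surfaces --- with the parity conditions as the delicate point to be watched throughout.
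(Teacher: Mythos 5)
Your plan is correct and follows essentially the same route as the paper: an explicit characteristic-2 computation of $T_1$ and $T_1^{\perp}$ (with $n=1$ indeed sufficing), followed by a comparison of $\dim_F T_1(A)$ when $k,s$ are both odd and, in the remaining parity cases --- where the dimensions of the Reynolds ideals for $c=0$ and $c=1$ actually coincide --- of the isomorphism type of the quotient ring $Z(A)/T_1(A)^{\perp}$, concretely via $\dim \overline{J}/\overline{J}^2$ for $\overline{J}$ its radical. The one shortcut you miss is that the paper never redoes the computation for $SD(2B)_1^{k,s}(c)$: the only term distinguishing its relations from those of the dihedral algebra $D(2B)^{k,s}(c)$ is $(\beta\gamma\alpha)^{k-1}\beta\gamma=[(\beta\gamma\alpha)^{k-1}\beta,\gamma]$, a commutator (since $\gamma\beta=0$), so every computation modulo $K(A)$ from the dihedral case carries over verbatim.
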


This settles an important open problem in the derived equivalence
classification of algebras of semidihedral type.
However, this does not yet complete this classification; there
is a second family $SD(2B)_2^{k,s}(c)$ involved for which we can
prove the following partial result.

\begin{theorem}
\label{thm2-intro-semidihedral}
Let $F$ be an algebraically closed field of characteristic 2.
For any given integers $k,s\ge 1$, consider the algebras of
semidihedral type
$SD(2B)_2^{k,s}(c)$ for the scalars $c=0$ and $c=1$.
If the parameters $k$ and $s$ are both odd, then
the algebras $SD(2B)_2^{k,s}(0)$ and $SD(2B)_2^{k,s}(1)$
have different sequences of generalized
Reynolds ideals. In particular, for $k$ and $s$ odd,
the algebras $SD(2B)_2^{k,s}(0)$ and
$SD(2B)_2^{k,s}(1)$ are not derived equivalent.
\end{theorem}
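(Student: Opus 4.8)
The plan is to follow the pattern of Theorems \ref{thm-intro-dihedral} and \ref{thm1-intro-semidihedral}. By the main theorem of \cite{Z}, the descending sequence of generalized Reynolds ideals $Z(A)\supseteq T_1(A)^\perp\supseteq T_2(A)^\perp\supseteq\cdots$ is a derived invariant of a symmetric algebra $A$, in the strong sense that a derived equivalence between $A$ and $B$ induces an $F$-algebra isomorphism $Z(A)\xrightarrow{\sim}Z(B)$ carrying $T_n(A)^\perp$ onto $T_n(B)^\perp$ for all $n$. Writing $A_c:=SD(2B)_2^{k,s}(c)$, it therefore suffices to show that for $k,s$ both odd there is no isomorphism $Z(A_0)\cong Z(A_1)$ matching the two chains; I would do this by exhibiting an index $n=n(k,s)$ with $\dim_F T_n(A_0)^\perp\neq\dim_F T_n(A_1)^\perp$.

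First I would record, from the presentation by quiver and relations recalled in Section \ref{sec-semidihedral}, an explicit $F$-basis of $A_c$ by paths, and compute the commutator space $[A_c,A_c]$ by evaluating commutators of basis elements. The scalar $c$ enters only through one relation, and I expect it to affect only the coset modulo $[A_c,A_c]$ of a single distinguished long path. In tandem, using the symmetrizing form one describes $Z(A_c)=[A_c,A_c]^\perp$ explicitly, together with the basis of $A_c$ dual (under the form) to a basis of $A_c$ adapted to $[A_c,A_c]$; this is the bookkeeping needed to pass between $T_n(A_c)$ and $T_n(A_c)^\perp$. One also notes that $Z(A_0)$ and $Z(A_1)$ are abstractly isomorphic as algebras, so any obstruction to a derived equivalence must come from the finer chain structure.

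The core of the argument is the analysis of the $2$-semilinear squaring map $\xi\colon A_c/[A_c,A_c]\to A_c/[A_c,A_c]$, $\overline{x}\mapsto\overline{x^2}$. Since $T_n(A_c)=\{x\in A_c:x^{2^n}\in[A_c,A_c]\}$, one has $T_n(A_c)/[A_c,A_c]=\ker(\xi^n)$, so the whole chain is governed by iterating $\xi$; over the algebraically closed field $F$ the space $A_c/[A_c,A_c]$ splits, Fitting-style, into the union of the $\ker(\xi^n)$ and a subspace on which $\xi$ is bijective, and the invariants $\dim_F\ker(\xi^n)$ are precisely what must be computed. I would square each basis element modulo $[A_c,A_c]$ and track the resulting $2$-power combinatorics — this is exactly where the arithmetic of $k$ and $s$ intervenes. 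The key phenomenon is a long path $u$ with $u^2\equiv c\,z\pmod{[A_c,A_c]}$ for a nonzero socle element $z$: for $c=1$ this puts the class of $u$ into $\ker\xi$, whereas for $c=0$ it only dies after $m=m(k,s)>1$ further applications of $\xi$; when $k$ and $s$ are both odd this discrepancy is not absorbed by other squares, and it produces values $\dim_F\ker(\xi^n)$ for $A_0$ and $A_1$ that differ at some $n$. Dualizing with the symmetrizing form turns this into $\dim_F T_n(A_0)^\perp\neq\dim_F T_n(A_1)^\perp$, which is incompatible with any isomorphism of centers respecting the chains, so by \cite{Z} the algebras are not derived equivalent.

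The step I expect to be the main obstacle is precisely this middle one: controlling the iterated squaring map on $A_c/[A_c,A_c]$ and pinning down the exact index $n$ at which the scalar $c$ forces the generalized Reynolds ideals to have different dimensions. The parity hypothesis is genuinely used here and only here — for other parities of $k$ and $s$ the same computation gives $T_n(A_0)^\perp=T_n(A_1)^\perp$ for every $n$, which is why this method yields only the partial statement asserted. By contrast, the surrounding steps (the path basis of $A_c$, the computation of $[A_c,A_c]$ and $Z(A_c)$, and the transition to $\perp$-spaces via the symmetrizing form) are routine linear algebra of the same kind already carried out for the dihedral algebras in Theorem \ref{thm-intro-dihedral}.
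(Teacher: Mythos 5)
Your proposal follows essentially the same route as the paper: compute a path basis, the commutator space $K(A_c)$ and the center, then detect the scalar $c$ through the squaring map on $A_c/K(A_c)$, concluding via the non-degenerate form and \cite{Z} that differing dimensions of $T_n(A_c)$ force non-derived-equivalence. The paper finds the discrepancy already at $n=1$, via the relation $\alpha^2=c(\alpha\beta\gamma)^k$: for $c=0$ one gets $\alpha^2=0\in K(A_0)$ so $\alpha\in T_1(A_0)$, while for $c=1$ one gets $\alpha^2=(\alpha\beta\gamma)^k\equiv\eta^t\not\equiv 0$ modulo $K(A_1)$ so $\alpha\notin T_1(A_1)$; note that your description of the key mechanism has the two cases reversed (with $u^2\equiv c\,z$ for a socle element $z\notin K$, it is $c=0$ that puts $u$ into $\ker\xi$ immediately, and $c=1$ for which it dies only one step later), though this does not affect the validity of the strategy since the dimensions of $\ker\xi$ differ either way.
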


Here in the semidihedral case, in
order to distinguish derived equivalence classes in the
remaining cases new derived
invariants would have to be discovered.

\section{Generalized Reynolds ideals}
\label{Sec-Reynolds}

The aim of this section is to briefly give the necessary
background on generalized Reynolds ideals,
as introduced by B. K\"ulshammer
\cite{Ku1}. For more details we
refer to the survey \cite{Ku2}.
For recent developments
we also refer to \cite{BHHKM}, \cite{HHKM},
\cite{Z}, \cite{Z2}.

Let $F$ an algebraically closed field of characteristic $p > 0$.
(For the theory of generalized Reynolds ideals a perfect
ground field would be sufficient.)
Generalized Reynolds ideals have originally been defined for symmetric
algebras (see \cite{BHZ} for an extension to arbitrary
finite-dimensional algebras).
Any finite-dimensional symmetric $F$-algebra $A$ has
an associative, symmetric, non-degenerate $F$-bilinear form
$\langle -,-\rangle : A
\times A \rightarrow F$. With respect to this form we have
for any subspace $M$ of $A$
the orthogonal space $M^{\bot}$.
Moreover, let $K(A)$ be the commutator subspace, i.e. the
$F$-subspace
of $A$ generated by all commutators $[a,b]:=a b - b a$,
where $a, b \in A$. For any $n \geq 0$ set
$$T_n (A) = \left\{ x \in A \mid x^{p^n} \in K(A)\right\}.$$
Then, by \cite{Ku1}, for any $n\ge 0$, the orthogonal
space $T_n (A)^{\bot}$ is an ideal of the center
$Z(A)$ of $A$. These are called {\it generalized Reynolds ideals}.
They form a descending sequence
$$Z(A) =  K(A)^{\perp} = T_0(A)^{\perp} \supseteq T_1(A)^{\perp}
\supseteq T_2(A)^{\perp}
\supseteq \ldots \supseteq T_n(A)^{\perp} \supseteq \ldots$$

In \cite{HHKM} it has been shown that the sequence of generalized
Reynolds ideals is invariant under Morita equivalences. More
generally, the following theorem has been proven recently by
the second author.

\begin{prop}[\cite{Z}, Theorem 1]
\label{prop:zimmermann}
Let $A$ and $B$ be finite-dimensional symmetric algebras over a perfect
field $F$ of positive characteristic $p$.
If $A$ and $B$ are derived equivalent, then
there is an isomorphism $\varphi : Z(A) \rightarrow Z(B)$ between
the centers of $A$ and $B$ such that $\varphi(T_n (A)^{\bot}) = T_n
(B)^{\bot}$ for all positive integers $n$.
\end{prop}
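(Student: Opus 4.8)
The plan is to reduce the statement to two compatibility assertions about Hochschild (co)homology and then to recognize the generalized Reynolds ideals inside that framework. First, by Rickard's theorem a derived equivalence between the symmetric algebras $A$ and $B$ may be assumed \emph{standard}, i.e. realized by a two-sided tilting complex $X \in D^b(A\otimes_F B^{\mathrm{op}})$ with inverse $X^{\vee}$, and such an $X$ induces isomorphisms of graded rings $\HH^{\ast}(A)\xrightarrow{\sim}\HH^{\ast}(B)$ and of graded modules $\HH_{\ast}(A)\xrightarrow{\sim}\HH_{\ast}(B)$ on Hochschild cohomology and homology (derived invariance of Hochschild (co)homology, Rickard/Happel). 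In degree $0$ these specialize to the isomorphism $\varphi\colon Z(A)=\HH^0(A)\to\HH^0(B)=Z(B)$ claimed in the statement, and to an isomorphism $\psi\colon A/K(A)=\HH_0(A)\to\HH_0(B)=B/K(B)$.

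Next I would rephrase the ideals $T_n(A)^{\bot}$ intrinsically. Since $K(A)^{\bot}=Z(A)$, the symmetrizing form induces a nondegenerate pairing $Z(A)\times(A/K(A))\to F$. Because $K(A)\subseteq T_1(A)$ (part of the chain of inclusions above), the assignment $a+K(A)\mapsto a^p+K(A)$ is a well-defined additive, $p$-semilinear map $\pi_A\colon A/K(A)\to A/K(A)$ (the relevant identities being Jacobson's formula and $(ab)^p\equiv(ba)^p$ modulo $K(A)$); iterating gives $\pi_A^{\,n}(a+K(A))=a^{p^n}+K(A)$, whence $T_n(A)/K(A)=\ker(\pi_A^{\,n})$ and $T_n(A)^{\bot}$ is exactly the orthogonal complement of $\ker(\pi_A^{\,n})$ under the pairing. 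The proof then comes down to: (i) $\varphi$ and $\psi$ are adjoint with respect to suitable symmetrizing forms on $A$ and $B$, up to a nonzero scalar; and (ii) $\psi\circ\pi_A=\pi_B\circ\psi$. Granting these, $\psi$ maps $\ker(\pi_A^{\,n})=T_n(A)/K(A)$ onto $\ker(\pi_B^{\,n})=T_n(B)/K(B)$, and adjointness forces $\varphi$ to send the orthogonal complement of the former onto that of the latter, i.e. $\varphi(T_n(A)^{\bot})=T_n(B)^{\bot}$. That the forms in (i) need not be the ones used to define the $T_n(-)^{\bot}$ is harmless, since by K\"ulshammer these ideals do not depend on the choice of symmetrizing form.

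The main obstacle is (ii). Compatibility (i) is essentially bookkeeping with the trace/evaluation maps attached to $X$, $X^{\vee}$ and the dualities $A\cong DA$, $B\cong DB$, of the same nature as the verification that $\varphi$ is a ring isomorphism. But $\pi_A$ is a genuinely non-additive, multiplication-dependent operation, so its compatibility with an isomorphism arising from a bimodule complex is not formal. The strategy I would pursue is to make $\psi$ explicit on the reduced Hochschild chain complexes via the trace map associated with $X$, and then show that this chain-level map intertwines the operation induced by $a\mapsto a^p$. One may attempt to simplify this by using that a standard derived equivalence of symmetric algebras induces a stable equivalence of Morita type, given by bimodules $M$ (an $A$-$B$-bimodule) and $N$ (a $B$-$A$-bimodule) with $M\otimes_B N\cong A\oplus(\mathrm{proj})$ and $N\otimes_A M\cong B\oplus(\mathrm{proj})$, where the induced map on centers is given by an explicit trace formula in terms of $M$ and $N$; the delicate point is then to control the projective direct summands, namely to see that the error terms they introduce have $p$-th powers in $K(A)$ (equivalently, die in $\HH_0$), which is where the symmetry of $A$ and a K\"ulshammer-type computation with the projective part of the center enter. (Stable equivalences of Morita type only control the centers modulo their projective ideals, so this route requires extra care.) Alternatively, one can try to recognize $\pi_A$ as part of the natural Frobenius-type structure carried by Hochschild homology in characteristic $p$ and invoke its functoriality under derived equivalence, bypassing the explicit chain computation at the cost of setting up that structure. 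Finally, since $\pi_A^{\,n}$ is merely the $n$-fold iterate of $\pi_A$, once (ii) is established for $\pi_A$ the conclusion follows for every $n$ by induction, yielding the asserted isomorphism of the full descending chains of generalized Reynolds ideals.
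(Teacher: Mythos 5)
This proposition is not proved in the paper at all; it is quoted verbatim from \cite{Z}, so the comparison has to be with the proof given there. Your architecture does match that proof: reduce to the nondegenerate pairing $Z(A)\times A/K(A)\to F$ coming from $K(A)^{\perp}=Z(A)$, identify $T_n(A)/K(A)$ with $\ker(\pi_A^{\,n})$ for the $p$-semilinear K\"ulshammer map $\pi_A$ on $\HH_0(A)=A/K(A)$, and then deduce $\varphi(T_n(A)^{\perp})=T_n(B)^{\perp}$ from (i) adjointness of the degree-zero Hochschild cohomology and homology isomorphisms induced by a two-sided tilting complex and (ii) the commutation $\psi\circ\pi_A=\pi_B\circ\psi$. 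All of that reduction is correct (one small imprecision: two symmetrizing forms on $A$ differ by a central \emph{unit}, not merely a scalar, but as you note the ideals $T_n(A)^{\perp}$ are unaffected, being ideals of $Z(A)$).

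The genuine gap is that step (ii) is the entire mathematical content of the theorem, and you do not prove it: you only list candidate strategies ("the strategy I would pursue\dots", "one can try\dots"). The reduction you do carry out is essentially bookkeeping already present in K\"ulshammer's work; what \cite{Z} actually establishes is precisely the missing compatibility, by writing the induced map $\HH_0(A)\to\HH_0(B)$ explicitly as an alternating sum of trace maps attached to the (projective) terms of the tilting complex and its inverse, and verifying that such trace maps intertwine $x\mapsto x^{p}$ modulo commutators --- a matrix-trace computation of the type $\operatorname{tr}(M)^{p}\equiv\operatorname{tr}(M^{p}) \bmod K$, extended to complexes. Your alternative route via stable equivalences of Morita type is riskier than you indicate: stable equivalences only control $Z(A)$ modulo the projective center, and one would still have to do the same trace computation to handle the projective summands, so nothing is saved. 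As it stands the proposal is a correct reduction plus an accurate diagnosis of where the difficulty lies, but not a proof.
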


We note that in the proof of \cite[Theorem 1]{Z} the
fact that $F$ is algebraically closed is never used.
The assumption on the field $F$ to be perfect is sufficient.
Hence the sequence of generalized Reynolds ideals gives a new
derived invariant for symmetric algebras over perfect
fields of positive characteristic.

The aim of the present note is to show how these new derived
invariants can be applied to some subtle questions in the
derived equivalence classifications of algebras of dihedral
and semidihedral type.

\section{A symmetric bilinear form}

Symmetric algebras are equipped with an associative, non-degenerate
symmetric bilinear form.
For actual computations with generalized Reynolds ideals one needs
to know such a symmetrizing form explicitly. We should stress
that the series of generalized Reynolds
ideals is independent of the choice of symmetrizing form. Indeed,
a symmetrizing form is equivalent to an identification of 
$A$ with its dual as $A$-$A$-bimodules. Hence, two symmetrizing forms 
differ by an automorphism of $A$ as an $A$-$A$-bimodule, i.e., 
by a central unit of $A$. Computing the 
Reynolds ideals with respect to
another symmetrizing form therefore 
just means multiplying them by a central unit;
in particular, this  
leaves them invariant, since Reynolds ideals are ideals of the centre. 
%
The algebras in our paper are all basic symmetric
algebras, defined by a quiver with relations $A=FQ/I$. There is the
following standard construction, which
provides a bilinear form very suitable for
actual calculations.
As usual, $\soc(A)$
denotes the socle of the algebra $A$. Recall that an algebra is
called weakly symmetric if for each projective indecomposable
module the top and the socle are isomorphic.

\begin{prop}
\label{prop:form}
Let $A=FQ/I$ be a weakly symmetric algebra given by the quiver $Q$ and
ideal of relations $I$, and fix
an $F$-basis ${\mathcal B}$ of $A$ consisting of pairwise
distinct non-zero paths of the quiver $Q$. Assume that
${\mathcal B}$ contains a basis of $\soc(A)$.
Then the following statements hold:
\begin{enumerate}
\item[{(1)}] Define an $F$-linear mapping $\psi$ on the basis elements by
$$
\psi(b)=\left\{
\begin{array}{ll} 1 & \mbox{if $b\in\soc(A)$} \\
                  0 & \mbox{otherwise}
\end{array} \right.
$$
for $b\in {\mathcal B}$.
Then an associative non-degenerate $F$-bilinear
form $\langle-,-\rangle$ for $A$ is given by
$\langle x,y\rangle := \psi(xy).$
\item[{(2)}] If $A$ is symmetric, then for any
$n\ge 0$, the socle $\soc(A)$ is contained in
the generalized Reynolds ideal $T_n(A)^{\perp}$.
\end{enumerate}
\end{prop}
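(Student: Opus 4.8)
For part (1), the plan is the standard verification for this well-known construction. First I would check that $\langle-,-\rangle$ is well-defined and $F$-bilinear, which is immediate since $\psi$ is $F$-linear and multiplication in $A$ is $F$-bilinear. Associativity, $\langle xy,z\rangle=\langle x,yz\rangle$, is trivial from $\langle x,y\rangle=\psi(xy)$ and associativity of multiplication. The only point requiring real work is non-degeneracy. Here I would argue element-wise on the basis ${\mathcal B}$ of paths: given a nonzero $x\in A$, write $x$ in the basis ${\mathcal B}$ and let $b$ be a basis path occurring in $x$ with nonzero coefficient; since $A$ is weakly symmetric, the socle of the projective indecomposable module at the terminal vertex of $b$ is one-dimensional and isomorphic to the top at the source vertex of $b$, so there is a path $b'$ with $bb'$ a nonzero socle element (in particular $bb'\in{\mathcal B}$ after scaling, by the hypothesis that ${\mathcal B}$ contains a basis of $\soc(A)$), while $cb'=0$ or lies outside the socle for the other basis paths $c$ contributing to $x$. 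Controlling this interference — ensuring that $\psi(xb')\ne 0$, i.e., that the socle-valued contributions do not cancel — is the main technical obstacle, and is handled by the usual local analysis at each vertex using that $eAe\cap\soc(A)$ is one-dimensional for each primitive idempotent $e$ in a weakly symmetric algebra. A symmetric argument gives non-degeneracy on the right.

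For part (2), assume $A$ is symmetric, so that $\langle-,-\rangle$ may be taken to be symmetric as well (by the remark preceding the proposition, changing the form only multiplies the Reynolds ideals by a central unit, so it suffices to work with one convenient symmetric form, and the form from part (1) is symmetric when $A$ is symmetric since $\psi$ vanishes on all commutators — a point I would note explicitly, as $K(A)$ is spanned by paths that are not closed loops at a single vertex together with differences, none of which hit the socle). Fix $n\ge 0$. To show $\soc(A)\subseteq T_n(A)^\perp$, I must show $\langle z,x\rangle=0$ for every $z\in\soc(A)$ and every $x\in T_n(A)$.

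The key observation is that $\soc(A)\cdot\rad(A)=0$, hence $\langle z,x\rangle=\psi(zx)$ depends only on the image of $x$ in $A/\rad(A)$. Now I would show that $T_n(A)\subseteq\rad(A)+K(A)$, or more precisely that the image of $T_n(A)$ in $A/\rad(A)$ is annihilated by $\psi(z\cdot-)$. Concretely: if $x\in T_n(A)$ then $x^{p^n}\in K(A)$; writing $x=x_0+r$ with $x_0$ a linear combination of primitive idempotents (the semisimple part modulo radical) and $r\in\rad(A)$, raising to the $p^n$-th power and using that $F$ has characteristic $p$ together with $A/\rad(A)\cong F\times\cdots\times F$ (as $F$ is algebraically closed), one finds $x_0^{p^n}=x_0$ modulo $\rad(A)$; since $K(A)\subseteq\rad(A)+[\text{sum of non-loop paths}]$ has zero image in the commutative semisimple quotient, $x^{p^n}\in K(A)$ forces $x_0\in\rad(A)$, i.e. $x\in\rad(A)$. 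Therefore $zx\in\soc(A)\rad(A)=0$, so $\langle z,x\rangle=0$, proving $\soc(A)\subseteq T_n(A)^\perp$. The only subtlety here is the claim that $K(A)$ has trivial image in $A/\rad(A)$, which holds because $A/\rad(A)$ is commutative (being a product of copies of $F$), so every commutator lies in $\rad(A)$; this is where algebraic closedness of $F$ (or at least that $A/\rad(A)$ is commutative) is used.
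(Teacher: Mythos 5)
Your part (1) has a genuine gap at exactly the point you yourself flag as ``the main technical obstacle'': non-degeneracy. Choosing a basis path $b$ occurring in $x$ and a path $b'$ with $bb'$ a nonzero socle element does not by itself give $\psi(xb')\neq 0$, because other basis paths $c$ occurring in $x$ may also satisfy $cb'\in\soc(A)\setminus\{0\}$, and (especially in characteristic $2$) the socle contributions can cancel; the assertion that this interference ``is handled by the usual local analysis at each vertex'' is not a proof, and it is not clear that the path-by-path strategy can be completed (one may have to vary $b'$ with the choice of $b$, or even use a $y$ that is not a single path). The paper's proof avoids the problem entirely by a module-theoretic step which is the idea missing from your sketch: after reducing to $x\in e_iA\setminus\{0\}$ (via $\psi(e_ixy)=\psi(xye_i)$), choose a simple submodule $S\le xA$ and $0\neq s=xy\in S$; then $s$ lies in $\soc(A)\cap e_iA=\soc(e_iA)$, which is one-dimensional and spanned by a basis path, so $\psi(xy)=\psi(s)\neq 0$ with no cancellation to control. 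I would also not lean on your parenthetical claim that the form of part (1) is automatically symmetric when $A$ is symmetric: a commutator $[a,b]$ with $a\in e_iAe_j$ and $b\in e_jAe_i$ produces $ab\in e_iAe_i$ and $ba\in e_jAe_j$, both of which can hit the socle, so $\psi$ does not visibly vanish on $K(A)$; the paper explicitly declines to prove this in general and instead checks symmetry directly for the algebras at hand, noting that the Reynolds ideals do not depend on the chosen symmetrizing form.

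Your part (2), by contrast, is correct and takes a genuinely different, more self-contained route than the paper. You show $T_n(A)\subseteq\rad(A)$ (since $x^{p^n}\in K(A)\subseteq\rad(A)$ and $A/\rad(A)\cong F\times\cdots\times F$ is reduced) and conclude from $\soc(A)\cdot\rad(A)=0$ that $\langle z,x\rangle=0$; note that this only uses associativity of the form, so it works for any symmetrizing form and does not actually need the symmetry claim discussed above. The paper instead quotes \cite{HHKM} for $\bigcap_{n}T_n(A)^{\perp}=\soc(A)\cap Z(A)$ and \cite{Ku2} for $K(A)\subseteq\rad(A)$, whence $\soc(A)\subseteq Z(A)$ by taking orthogonals. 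Your argument buys independence from the external result of \cite{HHKM}; the paper's buys brevity.
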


\begin{proof} (1)
By definition, since $A$ is an associative algebra,
$\psi$ is associative on basis elements,
hence is associative on all of $A$.

We observe now that $\psi(xe)=\psi(ex)$ for all $x\in A$ and all
primitive idempotents $e\in A$. Indeed, since $\psi$
is linear, we need to show this only on the elements
in $\mathcal B$. Let $b\in\mathcal B$. If $b$ is a path not in the
socle of $A$, then $be$ and $eb$ are either zero or not contained
in the socle either,
and hence $0=\psi(b)=\psi(be)=\psi(eb)$.
Moreover, by assumption $A$ is  weakly symmetric.
If $b\in{\mathcal B}$ is in the socle of $A$, then $b=e_bb=be_b$ for exactly
one primitive idempotent $e_b$ and $e'b=be'=0$ for each primitive idempotent
$e'\neq e_b$. Therefore, $\psi(e'b)=\psi(be')=0$ and
$\psi(e_bb)=\psi(b)=\psi(be_b)$.

It remains to show that the map $(x,y)\mapsto \psi(xy)$ is non-degenerate.
Suppose we had $x\in A\setminus\{0\}$ so that $\psi(xy)=0$ for all $y\in A$.
In particular for each primitive idempotent $e_i$ of $A$ we get
$\psi(e_ixy)=\psi(xye_i)=0$
for all $y\in A$.
Hence we may suppose that
$x\in e_iA$
for some primitive idempotent $e_i\in A$.

Now, $xA$ is a right $A$-module. Choose a simple submodule $S$ of $xA$ and
$s\in S\setminus\{0\}$. Then, since $s\in S\leq xA$ there is a $y\in A$ so that $s=xy$.
Since $S\leq xA\leq A$, and since $S$ is simple, $s\in \soc(A)\setminus\{0\}$.
Moreover, since $x\in e_iA$, also $s=e_is$, i.e. $s$ is in the (1-dimensional)
socle of the projective indecomposable module $e_iA$. So, up to a scalar factor,
$s$ is a path contained in the basis $\mathcal{B}$ (recall that
by assumption $\mathcal{B}$ contains a basis of the socle).
This implies that
$$\psi(xy)=\psi(s)=\psi(e_is)\neq 0,$$
contradicting the choice of $x$, and hence proving non-degeneracy.

\smallskip

(2) By \cite{HHKM} we have for any symmetric algebra $A$ that
$$\bigcap_{n=0}^{\infty} T_n(A)^{\perp} = \soc(A) \cap Z(A).$$
Moreover, using the proof given in \cite{Ku2}, for a basic algebra
for which the endomorphism rings of all simple modules are
commutative, we always have $\rad(A)\supseteq K(A)$ and hence,
taking orthogonal spaces,
$\soc(A)\subseteq Z(A)$.
\end{proof}

\begin{remark}\rm
We should mention that the hypothesis on the algebra $A$ in
the above proposition is satisfied for the algebras of dihedral
and semidihedral type we deal with in
this paper. Moreover, these algebras are symmetric algebras,
and for all of them the above-described form
$\langle-,-\rangle$ is actually symmetric
which can be checked directly from the definitions of the
algebras given below.
Hence, we shall use the form given in Proposition \ref{prop:form}
throughout
as symmetrizing form for our computations of generalized Reynolds
ideals.

With a more subtle analysis one might be able to show that if
$A=FQ/I$ as in the proposition is assumed to be symmetric then
the form $\langle-,-\rangle$ is always symmetric. We do not
embark on this aspect here.
\end{remark}

\section{Algebras of dihedral type}
\label{sec-dihedral}

Following K. Erdmann \cite[sec. VI.2]{Erdmann}, an algebra $A$
(over an algebraically closed field) is said to be {\em of
dihedral type} if it satisfies the following conditions:
\begin{itemize}
\item[{(i)}] $A$ is symmetric and indecomposable.
\item[{(ii)}] The Cartan matrix of $A$ is non-singular.
\item[{(iii)}] The stable Auslander-Reiten quiver of $A$ consists of
the following components: 1-tubes, at most two 3-tubes,
and non-periodic components of tree class
$\mathbb{A}_{\infty}^{\infty}$ or $\tilde{\mathbb{A}}_{1,2}$.
\end{itemize}

K. Erdmann classified these algebras up to Morita equivalence.
A derived equivalence classification of algebras of dihedral type
has been given in \cite{H-derived}. Any algebra of dihedral type
with two simple modules is derived
equivalent to a basic algebra $A^{k,s}_c:=D(2\mathcal{B})^{k,s}(c)$
where $k, s\ge 1$ are integers and the scalar is $c=0$ or $c=1$.
These algebras are defined by the following quiver
\medskip


\begin{center}
 \includegraphics[scale=0.6]{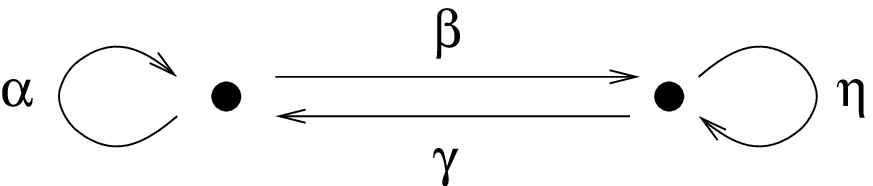}
\end{center}


subject to the relations
$$\beta\eta=0,~~ \eta\gamma=0,~~ \gamma\beta=0,~~
\alpha^2=c(\alpha\beta\gamma)^k,~~
(\alpha\beta\gamma)^k = (\beta\gamma\alpha)^k, ~~
\eta^s = (\gamma\alpha\beta)^k.
$$
Note that the case $s=1$ has to be interpreted so
that the loop $\eta$ doesn't exist in the quiver.

The algebras $A^{k,s}_0$ and $A^{k,s}_1$
are known to be isomorphic if the underlying field has characteristic
different from $2$ \cite[proof of VI.8.1]{Erdmann}.
So
we assume throughout this section that the underlying field
has characteristic 2.

For any $k,s\ge 1$ (and fixed $c$) the algebras $A^{k,s}_c$
and $A^{s,k}_c$ are derived equivalent \cite[lemma 3.2]{H-derived}.
So the derived equivalence classes are represented
by the algebras $A^{k,s}_c$ where $k\ge s\ge 1$ and $c\in\{0,1\}$.
Moreover, for different parameters $k',s'\ge 1$, i.e.
if $\{k,s\}\neq \{k',s'\}$, the algebra
$A^{k',s'}_d$ (where $d\in\{0,1\}$)
is not derived equivalent to $A^{k,s}_c$
\cite[lemma 3.3]{H-derived}.

Blocks of finite group algebras having dihedral defect group
of order $2^n$ and two simple modules are Morita equivalent
to algebras $D(2\mathcal{B})^{1,2^{n-2}}(c)$.

In this section we are going to study the sequence of
generalized Reynolds ideals
$$Z(A^{k,s}_c) \supseteq T_1(A^{k,s}_c)^{\perp}
\supseteq T_2(A^{k,s}_c)^{\perp}
\supseteq \ldots \supseteq T_r(A^{k,s}_c)^{\perp} \supseteq \soc(A^{k,s}_c)$$
of the center.
It is known by \cite{Z} that this sequence is invariant under
derived equivalences.

Our main result in this section is the following, partly
restating
Theorem \ref{thm-intro-dihedral} of the Introduction.

\begin{theorem} \label{thm-dihedral}
Let $k, s\ge 1$, and suppose that if $k=2$ then $s\ge 3$ is odd, and
if $s=2$ then $k\ge 3$ is odd.

Then the factor rings $Z(A^{k,s}_0)/T_1(A^{k,s}_0)^{\perp}$
and $Z(A^{k,s}_1)/T_1(A^{k,s}_1)^{\perp}$ are not isomorphic.

In particular, the algebras $A^{k,s}_0$ and $A^{k,s}_1$ are
not derived equivalent.
\end{theorem}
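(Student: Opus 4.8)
The plan is to compute everything explicitly using the symmetrizing form from Proposition \ref{prop:form}, which is legitimate since the form for these algebras is symmetric (as noted in the Remark), and then extract the ring-theoretic invariant $Z(A^{k,s}_c)/T_1(A^{k,s}_c)^{\perp}$. First I would write down an explicit $F$-basis $\mathcal{B}$ of $A^{k,s}_c$ consisting of paths in the quiver, organized by the primitive idempotents $e_1, e_2$ (the vertices), and identify within it a basis of $\soc(A^{k,s}_c)$; using the relations $\beta\eta = \eta\gamma = \gamma\beta = 0$ etc., the nonzero paths are essentially powers of $\alpha\beta\gamma$, $\beta\gamma\alpha$, $\gamma\alpha\beta$ up to length, together with $\alpha$-adjustments and powers of $\eta$, so this is a finite and manageable list. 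From this I would read off a basis of the center $Z(A^{k,s}_c)$ — central elements are $F$-linear combinations of paths fixed (in the appropriate sense) by all the quiver arrows — and note that $Z$ is the same as an $F$-vector space for $c=0$ and $c=1$ but the multiplication is twisted by the relation $\alpha^2 = c(\alpha\beta\gamma)^k$.

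Next I would compute $T_1(A^{k,s}_c)^{\perp}$. Here the key is to determine $T_1(A) = \{x \in A : x^2 \in K(A)\}$, equivalently, since we are in characteristic $2$, to use the standard fact that $x \mapsto x^2$ induces a semilinear (Frobenius-twisted) map $A/K(A) \to A/K(A)$ whose relevant structure controls $T_n$. Concretely, $T_1(A)^{\perp}$ is computed by first finding $K(A)$ — spanned by commutators $[b,b']$ of basis paths, which is easy to list — then its orthogonal complement $K(A)^{\perp} = Z(A)$, and then cutting down by the squaring condition: $z \in Z(A)$ lies in $T_1(A)^{\perp}$ iff $\langle z, x\rangle = 0$ for every $x$ with $x^2 \in K(A)$. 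In practice one uses the dual description: $T_1(A)^{\perp}$ is the span of $\soc(A)$ together with the "square part" of $Z(A)$, i.e. elements of the form $z^2 + (\text{socle correction})$; this is exactly where the scalar $c$ enters, because $\alpha^2 = c(\alpha\beta\gamma)^k$ means that for $c=1$ the element $\alpha$ contributes a nonzero socle element to the image of squaring, while for $c=0$ it does not. I would carry out this computation carefully, tracking which socle basis vectors appear in $\{x^2 : x \in A\}$ modulo $K(A)$, as a function of $k$, $s$, and $c$.

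Having both $Z(A^{k,s}_c)$ and $T_1(A^{k,s}_c)^{\perp}$ in hand, I would form the factor ring $R_c := Z(A^{k,s}_c)/T_1(A^{k,s}_c)^{\perp}$ and compute a genuine ring-isomorphism invariant of it — most naturally the dimension of $R_c$, or if the dimensions coincide, the dimension of $\rad(R_c)^2$ or of $\soc(R_c)$, or the structure of $R_c$ as a commutative local $F$-algebra (e.g. whether $\rad(R_c)^j = 0$ for small $j$). The hypotheses on $k, s$ (the exclusions when $k=2$ or $s=2$, and the parity requirement) are precisely the cases where the naive dimension count would fail to separate $c=0$ from $c=1$, so I expect the proof to split into cases according to the sizes and parities of $k$ and $s$, in each case exhibiting one numerical or structural invariant of $R_0$ that differs from the corresponding invariant of $R_1$.

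The main obstacle, I expect, is the bookkeeping in the second step: correctly identifying $T_1(A^{k,s}_c)^{\perp}$ requires understanding the image of the (non-additive!) squaring map on all of $A$, not just on $Z(A)$, modulo $K(A)$, and in characteristic $2$ this involves cross-terms $[x,y]$ when expanding $(x+y)^2 = x^2 + y^2 + [x,y] + 2yx$, so one must argue that the span of all squares is $\{x^2 : x \text{ basis path}\} + K(A)$ up to the relevant quotient — this is a known reduction but needs care. Once the span of squares is pinned down, the dependence on $c$ is transparent: the relation $\alpha^2 = c(\alpha\beta\gamma)^k$ is the only place the two algebras differ, so the difference between $T_1(A^{k,s}_0)^{\perp}$ and $T_1(A^{k,s}_1)^{\perp}$ reduces to whether or not $(\alpha\beta\gamma)^k \in Z(A)$ lies in the relevant socle-orthogonal space, and the stated restrictions on $k,s$ are exactly what guarantees that this one-dimensional discrepancy is visible in the isomorphism type of the quotient ring rather than being absorbed. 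Finally, the "in particular" follows immediately from Proposition \ref{prop:zimmermann}: a derived equivalence would give a ring isomorphism $Z(A^{k,s}_0) \to Z(A^{k,s}_1)$ carrying $T_1(A^{k,s}_0)^{\perp}$ onto $T_1(A^{k,s}_1)^{\perp}$, hence an isomorphism of the factor rings, contradicting what we have shown.
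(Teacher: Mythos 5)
Your proposal follows essentially the same route as the paper: fix the path basis and the symmetrizing form of Proposition \ref{prop:form}, compute $K(A)$, $Z(A)$ and $T_1(A)$ via the squaring map modulo commutators (using that $(x+y)^2\equiv x^2+y^2 \pmod{K(A)}$ in characteristic $2$), pass to $T_1(A)^{\perp}$, and distinguish the factor rings by dimension or, when the dimensions agree, by the dimension of $\overline{J}/\overline{J}^2$ for their radicals, splitting into cases according to the parities of $k$ and $s$, with the ``in particular'' deduced from Proposition \ref{prop:zimmermann} exactly as you say. The only detail you omit is that the paper handles the boundary and asymmetric cases (e.g.\ $s=2$ with $k>2$ odd, and $k$ even with $s$ odd) by invoking the derived equivalence $A^{k,s}_c\simeq A^{s,k}_c$ rather than by a separate computation.
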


\begin{remark}
{\em

This result has already been obtained earlier
by M. Kauer \cite{Kauer},
\cite{KR}, using entirely different methods, as byproduct
of a rather sophisticated study of the class of so-called
graph algebras. (With this method, the cases of small
parameters excluded above can also be dealt with.)
However, our
new 'linear algebra' proof seems to be more elementary. Moreover,
our methods can successfully
be extended to algebras of semidihedral type, as
we shall see in the next section, in contrast to the methods
in \cite{Kauer}, \cite{KR}.
}
\end{remark}

Before embarking on the proof of Theorem \ref{thm-dihedral}
we need to collect some prerequisites, and thereby we also
set some notation.

\subsection{Bases for the algebras}
\label{subsec-basis}
We fix the integers $k,s\ge 1$.
We have to compute in detail with elements of the algebras
$A^{k,s}_c$ where $c=0$ or $c=1$.
Both algebras are of dimension $9k+s$ (cf. \cite{Erdmann}), the Cartan matrix is of
the form
$$\left( \begin{array}{cc}
4k & 2k \\ 2k & k+s
\end{array} \right).
$$
A basis of $A^{k,s}_c$ is given by
the union of the following bases of the subspaces
$e_iA^{k,s}_c e_j$, where $e_1$ and $e_2$ are the idempotents
corresponding to the trivial paths at the vertices of the quiver:
$$\mathcal{B}_{1,1}:=\{e_1, (\alpha\beta\gamma)^{i},
\alpha, (\alpha\beta\gamma)^{i}\alpha,
(\beta\gamma\alpha)^{i},\beta\gamma,(\beta\gamma\alpha)^{i}\beta\gamma,
(\alpha\beta\gamma)^k=(\beta\gamma\alpha)^k\,:\,1\le i\le k-1\}
$$
$$\mathcal{B}_{1,2}:=\{\beta,(\beta\gamma\alpha)^{i}\beta,
\alpha\beta,(\alpha\beta\gamma)^{i}\alpha\beta\,:\,1\le i\le k-1\}
$$
$$\mathcal{B}_{2,1}:=\{\gamma,
(\gamma\alpha\beta)^{i}\gamma,
\gamma\alpha,
(\gamma\alpha\beta)^{i}\gamma\alpha\,:\,1\le i\le k-1\}
$$
$$\mathcal{B}_{2,2}:=\{e_2, (\gamma\alpha\beta)^{i},
\eta, \eta^2,\ldots, \eta^{s-1}, \eta^s
\,:\,1\le i\le k-1\}.
$$
Note that this basis
$\mathcal{B}_{1,1}\cup \mathcal{B}_{1,2}\cup
\mathcal{B}_{2,1}\cup \mathcal{B}_{2,2}$
is independent of the scalar $c$.

\subsection{The centers}
\label{subsec-center}
The center of $A^{k,s}_c$ has dimension $k+s+2$ (cf. \cite{Erdmann}), a basis of
the center $Z(A^{k,s}_c)$ is given by
$$\mathcal{Z}:=
\{1,(\alpha\beta\gamma)^{i}+(\beta\gamma\alpha)^{i}+
(\gamma\alpha\beta)^{i}, (\beta\gamma\alpha)^{k-1}\beta\gamma,
(\alpha\beta\gamma)^k,
\eta^j\,:\,1\le i\le k-1,\, 1\le j\le s\}.
$$
Note that this basis is also independent of the scalar $c$.

\subsection{The commutator spaces}
\label{subsec-comm}
The algebras $A^{k,s}_c$ are symmetric, so
the commutator space $K(A^{k,s}_c)$ has dimension
$$\dim K(A^{k,s}_c) = \dim A^{k,s}_c - \dim Z(A^{k,s}_c) =
9k+s - (k+s+2) = 8k-2.$$
Indeed, the center of an algebra is the degree $0$ Hochschild cohomology of the algebra,
the quotient space of the algebra modulo the commutators is the degree $0$ Hochschild homology
of the algebra, and the $k$-linear dual of the Hochschild homology of an algebra is isomorphic
to the Hochschild cohomology of the algebra with values in the space of linear forms
of the the algebra
(cf. \cite[Chapter 1, Exercice 1.5.3, Corollary 1.1.8 and
Section 1.5.2]{Loday}).
A basis
of $K(A^{k,s}_c)$ is given by the union
$$\mathcal{K}:=\mathcal{B}_{1,2}\cup \mathcal{B}_{2,1}\cup
\mathcal{K}_{1,1}\cup \mathcal{K}_{2,2}$$
where $\mathcal{B}_{1,2}$ and $\mathcal{B}_{2,1}$
have been defined above and where
$$\mathcal{K}_{1,1}:=\{\beta\gamma,
(\alpha\beta\gamma)^{i}+(\beta\gamma\alpha)^{i},
(\alpha\beta\gamma)^{i}\alpha,
(\beta\gamma\alpha)^{i}
\beta\gamma\,:\, 1\le i\le k-1\}
$$
and
$$\mathcal{K}_{2,2}:=\{(\alpha\beta\gamma)^i+(\gamma\alpha\beta)^i\,:\,1\le i\le k\}.
$$

\subsection{The spaces $T_1$}
\label{subsec-T1}
We now consider the spaces
$$T_1(A^{k,s}_c):=\{ x\in A^{k,s}_c\,:\,x^2\in K(A^{k,s}_c)\}.$$
Note that the commutator space is always contained in
$T_1$ \cite[eq.\,(16)]{Ku2}.
Recall that a basis for $K(A^{k,s}_c)$ was given in
Section \ref{subsec-comm}.
The codimension of the commutator space inside the entire algebra
is
$$\dim A^{k,s}_c/K(A^{k,s}_c) = \dim Z(A^{k,s})= 9k+s - (8k-2) = k+s+2.$$

A basis of $A^{k,s}_c/K(A^{k,s}_c)$ is given by the cosets of the
following paths
\begin{equation} \label{basisA/KA}
\{e_1,e_2,\alpha,
\alpha\beta\gamma,\ldots,(\alpha\beta\gamma)^{k-1},
\eta,\ldots,\eta^{s-1},\eta^s\}.
\end{equation}
From this, we determine bases of the spaces $T_1(A^{k,s}_c)$.
It turns out that they depend on the parity of $k$ and $s$
(and on the scalar $c$). Recall that we denoted the above
basis of the commutator space by $\mathcal{K}$. By
$\lfloor.\rfloor$ and $\lceil.\rceil$ we denote the usual floor
and ceiling functions, respectively.

\begin{lemma}
\label{lem-T-dihedral}
A basis of $T_1(A^{k,s}_c)$ is given by the union
$$\mathcal{T}:= \mathcal{K}\cup
\{(\alpha\beta\gamma)^{\lceil \frac{k+1}{2}\rceil},\ldots,
(\alpha\beta\gamma)^{k-1},\eta^{\lceil \frac{s+1}{2}\rceil},\ldots,
\eta^s\}\,\cup\,\mathcal{N}
$$
where the set $\mathcal{N}$ is equal to
$$\left\{
\begin{array}{ll}
\{\alpha\} & \mbox{{\em if $c=0$ and $k$ or $s$ odd}} \\
\{\alpha, (\alpha\beta\gamma)^{\lfloor k/2\rfloor}+
     \eta^{\lfloor s/2\rfloor} \}
            & \mbox{{\em if $c=0$ and $k,s$ even}} \\
\emptyset  & \mbox{{\em if $c=1$ and $k,s$ odd}} \\
\{\alpha+\eta^{\lfloor s/2\rfloor}\} &
          \mbox{{\em if $c=1$ and $k$ odd, $s$ even}} \\
\{\alpha+(\alpha\beta\gamma)^{\lfloor k/2\rfloor} \} &
            \mbox{{\em if $c=1$ and $k$ even, $s$ odd}} \\
\{\alpha+(\alpha\beta\gamma)^{\lfloor k/2\rfloor},
(\alpha\beta\gamma)^{\lfloor k/2\rfloor}+\eta^{\lfloor s/2\rfloor}\}  &
          \mbox{{\em if $c=1$ and $k,s$ even}} \\
\end{array}
\right.
$$
\end{lemma}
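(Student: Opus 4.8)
The plan is to compute $T_1(A^{k,s}_c)$ by understanding the squaring map modulo the commutator space $K(A^{k,s}_c)$. Since $K := K(A^{k,s}_c) \subseteq T_1(A^{k,s}_c)$ and a basis of $A^{k,s}_c/K$ is given by the cosets of the paths in \eqref{basisA/KA}, an element $x \in A^{k,s}_c$ lies in $T_1$ if and only if, writing $x = \lambda_1 e_1 + \lambda_2 e_2 + \mu\,\alpha + \sum_{i=1}^{k-1} a_i (\alpha\beta\gamma)^i + \sum_{j=1}^{s} b_j \eta^j + (\text{element of } K)$, the square $x^2$ again lies in $K$. Since we work in characteristic $2$, the Frobenius-type identity $(u+v)^2 = u^2 + v^2 + [u,v] + vu + vu$ — more precisely $(u+v)^2 \equiv u^2 + v^2 \pmod K$ for all $u,v$ — shows that $x \mapsto x^2 \bmod K$ is additive on $A^{k,s}_c$, and in fact $F$-semilinear with respect to the Frobenius $\lambda \mapsto \lambda^2$ (which is an automorphism since $F$ is algebraically closed, hence perfect). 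So the first step is to record the squares modulo $K$ of each of the basis elements in \eqref{basisA/KA}.

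Second, I would carry out exactly that computation using the relations. We have $e_i^2 = e_i \notin K$ (the cosets of $e_1,e_2$ are part of the basis of $A/K$), $\alpha^2 = c(\alpha\beta\gamma)^k$, and $(\alpha\beta\gamma)^{2i} \equiv ?$, $\eta^{2j} \equiv ?$ modulo $K$. Using $\mathcal{K}_{1,1}$ and $\mathcal{K}_{2,2}$ one sees that modulo $K$ one has $(\alpha\beta\gamma)^i \equiv (\beta\gamma\alpha)^i \equiv (\gamma\alpha\beta)^i$ for $1\le i\le k-1$, that the top socle element $(\alpha\beta\gamma)^k=(\beta\gamma\alpha)^k$ is central but $(\gamma\alpha\beta)^k = \eta^s$, and $\eta^s \equiv (\alpha\beta\gamma)^k \pmod K$ via $\mathcal{K}_{2,2}$ at $i=k$. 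Thus the subspace $S$ of $A/K$ spanned by the cosets of $(\alpha\beta\gamma)^i$ ($1\le i\le k-1$), $\eta^j$ ($1\le j\le s$) carries the squaring operator; and since $e_1^2=e_1$, $e_2^2=e_2$, and cross terms such as $e_1\cdot(\text{path})$ do not create socle-type elements outside $K$, one reduces to understanding the Frobenius-semilinear operator $F$ on the span of $\{\mu\alpha\} \oplus S$. Concretely: $\alpha^2 = c(\alpha\beta\gamma)^k \equiv c\,\eta^s$, $((\alpha\beta\gamma)^i)^2 = (\alpha\beta\gamma)^{2i}$ which is $0$ if $2i \ge k+1$ except $=(\alpha\beta\gamma)^k$ if $2i=k$ (i.e. $k$ even, $i=k/2$), and $(\eta^j)^2 = \eta^{2j}$ which is $0$ if $2j\ge s+1$ except $=\eta^s$ if $2j=s$ (i.e. $s$ even, $j=s/2$). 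So modulo $K$: $((\alpha\beta\gamma)^i)^2 \equiv 0$ for $\lceil\frac{k+1}{2}\rceil \le i \le k-1$ and $\equiv (\alpha\beta\gamma)^k \equiv \eta^s$ if $k$ even and $i=k/2$; likewise for $\eta^j$. This immediately explains why $(\alpha\beta\gamma)^i$ for $i \ge \lceil\frac{k+1}{2}\rceil$ and $\eta^j$ for $j\ge\lceil\frac{s+1}{2}\rceil$ lie in $T_1$, accounting for the middle block of $\mathcal{T}$.

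Third, I would determine $\mathcal{N}$, the remaining "low-degree" part. An element $x \equiv \mu\alpha + a_1(\alpha\beta\gamma) + \cdots + a_{\lfloor k/2\rfloor}(\alpha\beta\gamma)^{\lfloor k/2\rfloor} + b_1\eta + \cdots + b_{\lfloor s/2\rfloor}\eta^{\lfloor s/2\rfloor} \pmod K$ (plus the contributions already known to be in $T_1$) satisfies $x^2 \in K$ iff the Frobenius-semilinear image vanishes mod $K$. By the above, $x^2 \equiv \mu^2 c\,\eta^s + a_{\lfloor k/2\rfloor}^2 [k \text{ even}]\,\eta^s + b_{\lfloor s/2\rfloor}^2 [s \text{ even}]\,\eta^s \pmod{K}$, while all lower powers $(\alpha\beta\gamma)^{2i}$, $\eta^{2j}$ with $2i,2j$ strictly between the relevant bounds that are still nonzero are already linearly independent of $K$ and force their coefficients to vanish — so $a_i = 0$ for $1\le i\le \lceil\frac{k+1}{2}\rceil - 1$ except possibly $i=\lfloor k/2\rfloor$ when $k$ is even (whose square is $\eta^s$), and similarly for the $b_j$. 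One is then left with a single linear condition (over $F$, in the squared coefficients, hence equivalent to a linear condition in $\mu, a_{\lfloor k/2\rfloor}, b_{\lfloor s/2\rfloor}$ after taking square roots): $c\mu + \varepsilon_k a_{\lfloor k/2\rfloor} + \varepsilon_s b_{\lfloor s/2\rfloor} = 0$ where $\varepsilon_k = 1$ iff $k$ even and $\varepsilon_s=1$ iff $s$ even. Solving this linear equation case by case on the parities of $k,s$ and on $c\in\{0,1\}$ produces exactly the six cases listed for $\mathcal{N}$: e.g. if $c=1$ and $k,s$ both even the solution space is $\{\mu = a_{\lfloor k/2\rfloor} + b_{\lfloor s/2\rfloor}\}$, two-dimensional with the displayed basis $\{\alpha + (\alpha\beta\gamma)^{\lfloor k/2\rfloor},\ (\alpha\beta\gamma)^{\lfloor k/2\rfloor} + \eta^{\lfloor s/2\rfloor}\}$; if $c=1$, $k,s$ odd, there are no $\alpha$-, $(\alpha\beta\gamma)^{k/2}$- or $\eta^{s/2}$-terms and the condition $\mu=0$ forces $\mathcal{N} = \emptyset$; and the $c=0$ cases drop the $c\mu$ term so $\alpha$ is always free. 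Finally one checks the dimension count: $\dim T_1 = \dim K + \#(\text{middle block}) + \#\mathcal{N}$ matches in each case, confirming $\mathcal{T}$ is a basis and not merely a spanning/independent subset.

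The main obstacle I anticipate is bookkeeping rather than conceptual: correctly handling the identifications modulo $K$ among $(\alpha\beta\gamma)^i$, $(\beta\gamma\alpha)^i$, $(\gamma\alpha\beta)^i$, $(\alpha\beta\gamma)^k$ and $\eta^s$ — in particular the fact that $(\alpha\beta\gamma)^k \equiv \eta^s \pmod K$ but $(\beta\gamma\alpha)^{k-1}\beta\gamma$ is a \emph{separate} central socle element not hit by any square — and making sure no cross-terms $e_i \cdot (\text{path})$, $\alpha\cdot(\alpha\beta\gamma)^i$, etc., in the expansion of $x^2$ secretly contribute something outside $K$ (they do not, because $\beta\eta = \eta\gamma = \gamma\beta = 0$ kills all the mixed paths that could land in a socle, and $\alpha\cdot(\alpha\beta\gamma)^i = (\alpha\beta\gamma)^i\alpha \in \mathcal{K}_{1,1}\subseteq K$). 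Once those identifications are pinned down precisely, the semilinearity reduces everything to solving one $F$-linear equation per parity/scalar case, which is routine.
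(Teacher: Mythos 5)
Your proposal is correct and follows essentially the same route as the paper: reduce modulo $K(A^{k,s}_c)\subseteq T_1(A^{k,s}_c)$, use that squaring is Frobenius-semilinear on $A/K$ in characteristic $2$, compute the squares of the coset basis elements (with $(\alpha\beta\gamma)^k\equiv\eta^s \pmod K$), and solve the resulting linear conditions case by case on $c$ and the parities of $k,s$. The only (immaterial) slip is the parenthetical claim that $\alpha\cdot(\alpha\beta\gamma)^i=(\alpha\beta\gamma)^i\alpha$ as elements — the first product is actually $0$ while the second is a nonzero element of $\mathcal{K}_{1,1}$ — but the cross terms lie in $K$ anyway because they are commutators, as your additivity argument already shows.
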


\proof As mentioned above, the commutator space is always
contained in $T_1(A^{k,s}_c)$ \cite[eq.\,(16)]{Ku2}. So it
remains to deal with the elements outside the commutator,
and we use the basis of $A^{k,s}_c/K(A^{k,s}_c)$ given above in
(\ref{basisA/KA}). So we consider a linear combination
$$\lambda := a_0\alpha + a_1(\alpha\beta\gamma)+\ldots
+a_{k-1}(\alpha\beta\gamma)^{k-1} + b_1\eta +
\ldots + b_{s-1}\eta^{s-1} + b_s\eta ^s,$$
where $a_i,b_j\in F$, and the question is, when
is $\lambda^2\in K(A^{k,s}_c)$?
(Note that for this question the idempotents occurring
in the basis (\ref{basisA/KA}) can be
disregarded.) Since we are working in characteristic 2, we get
$$\lambda^2 = a_0^2\alpha^2 + \ldots + a_{\lfloor k/2\rfloor}^2
\left((\alpha\beta\gamma)^{\lfloor k/2\rfloor}\right)^2 + b_1^2\eta^2 +
\ldots + b_{\lfloor s/2\rfloor}^2 \left(\eta^{\lfloor s/2\rfloor}\right)^2 \;\;(\mbox{mod }K(A_c^{k,s})).
$$
Thus we can deduce that $\lambda^2\in K(A^{k,s}_c)$ if and only if
the following conditions are satisfied:
\begin{itemize}
\item[{(i)}] $a_1=\ldots = a_{\lfloor k/2\rfloor -1} = 0$ and
$b_1=\ldots = b_{\lfloor s/2\rfloor -1} = 0$
\item[{(ii)}]
$$\left\{ \begin{array}{ll}
a_{\lfloor k/2\rfloor} = 0 = b_{\lfloor s/2\rfloor} &
     \mbox{if $c=0$ and $k$ or $s$ odd} \\
 a_{\lfloor k/2\rfloor} = b_{\lfloor s/2\rfloor}  &
      \mbox{if $c=0$ and $k,s$ even} \\    
a_0=0,~ a_{\lfloor k/2\rfloor} = 0 = b_{\lfloor s/2\rfloor} &
     \mbox{if $c=1$ and $k,s$ odd} \\
a_0= b_{\lfloor s/2\rfloor}, ~ a_{\lfloor k/2\rfloor} = 0 &
          \mbox{if $c=1$ and $k$ odd, $s$ even} \\
a_0= a_{\lfloor k/2\rfloor}, ~ b_{\lfloor s/2\rfloor} = 0 &
      \mbox{if $c=1$ and $k$ even, $s$ odd} \\
 a_0 + a_{\lfloor k/2\rfloor} + b_{\lfloor s/2\rfloor} = 0  &
     \mbox{if $c=1$ and $k,s$ even} \\
\end{array}\right.
$$
\end{itemize}
\smallskip

These conditions directly translate into the statement
on the basis elements in the set $\mathcal{N}$, thus
proving the lemma.
\qed

\subsection{Proof of Theorem \ref{thm-dihedral}}
\label{subsec-proof-dihedral}
We are now in the position to prove Theorem \ref{thm-dihedral},
the main result of this section.
To this end, we have to
distinguish cases according to the parity of $k$ and $s$.
In each case we have to show that the algebras
$A^{k,s}_0$ and $A^{k,s}_1$ are not derived equivalent.

\subsubsection{Case $k,s$ odd}
By Lemma \ref{lem-T-dihedral}, the spaces $T_1(A^{k,s}_0)$ and
$T_1(A^{k,s}_1)$ have different dimensions. But these dimensions
are invariant under derived equivalences \cite[Theorem 1]{Z},
the dimension of the center being invariant and the bilinear form
being non-degenerate.
Hence, the algebras $A^{k,s}_0$ and $A^{k,s}_1$ are not derived
equivalent.

\subsubsection{Case $k$ odd, $s$ even}
\label{subsubsec-oddeven}

We first determine bases of the ideals $T_1(A^{k,s}_c)^{\perp}$.
Recall that these are ideals of the center $Z(A^{k,s}_c)$.
We are going to work with the bases $\mathcal{Z}$
of the center given in \ref{subsec-center}.
A straightforward computation yields that a basis for the orthogonal
space $T_1(A^{k,s}_c)^{\perp}$ is given by
$$\mathcal{T}^{\perp}:= \mathcal{N}' \cup
\{(\alpha\beta\gamma)^{i}+(\beta\gamma\alpha)^i+
(\gamma\alpha\beta)^i, (\alpha\beta\gamma)^k, \eta^j\,:\,
\lceil k/2\rceil\le i\le k-1,~s/2\le j\le s \}
$$
where
$$\mathcal{N}':=
\left\{ \begin{array}{ll}
\{\eta^{s/2}\} & \mbox{if $c=0$} \\
\{\eta^{s/2}+(\beta\gamma\alpha)^{k-1}\beta\gamma\} & \mbox{if $c=1$}
\end{array}
\right.
$$
We set $Z_c:=Z(A^{k,s}_c)$ for abbreviation and consider
the factor rings $\overline{Z}_c:=Z_c/T_1(A^{k,s}_c)^{\perp}$.

A basis of these factor rings can be given (independently
of $c$) by the cosets of the following central elements
$$\{ 1, (\alpha\beta\gamma)^i+(\beta\gamma\alpha)^i+
(\gamma\alpha\beta)^i, (\beta\gamma\alpha)^{k-1}\beta\gamma,
\eta^j\,:\, 1\le i\le \lceil k/2\rceil -1,~1\le j\le s/2-1\}.
$$
In order to show that these factor rings are not isomorphic,
we consider their Jacobson radicals
$\overline{J}_c:=\rad(\overline{Z}_c)$.
Clearly, a basis for $\overline{J}_c$ is obtained from the above
basis of $\overline{Z}_c$ by removing the unit element $1$.

The crucial observation now is that for $c=1$, we have that
$\eta^{s/2}=(\beta\gamma\alpha)^{k-1}\beta\gamma$.

If $s>2$ this implies that $(\beta\gamma\alpha)^{k-1}\beta\gamma$ is
contained in the square of the radical.
So the space $\overline{J}_1/\overline{J}^2_1$ has
dimension 2, spanned by the cosets of $\eta$ and $\alpha\beta\gamma+
\beta\gamma\alpha+\gamma\alpha\beta$.

On the other hand, if $c=0$ then $\overline{J}_0/\overline{J}^2_0$ has
dimension 3, spanned by the cosets of $\eta$, $\alpha\beta\gamma+
\beta\gamma\alpha+\gamma\alpha\beta$ and
$(\beta\gamma\alpha)^{k-1}\beta\gamma$.

Hence, if $s>2$, the factor rings $\overline{Z}_0$ and $\overline{Z}_1$
are not isomorphic. In particular, $A^{k,s}_0$
and $A^{k,s}_1$ are not derived equivalent.

By assumption we have that $s>2$ or $k>2$. The case $s=2$ and $k>2$
follows from the above argumentation using the fact
that the algebras $A^{k,s}_c$ and $A^{s,k}_c$ are
derived equivalent \cite[lem. 3.2]{H-derived}.

\subsubsection{Case $k$ even, $s$ odd}
This case follows from Subsection \ref{subsubsec-oddeven}
once we use that, for given $c$, the algebra $A^{k,s}_c$ is derived
equivalent to $A^{s,k}_c$ \cite[lem. 3.2]{H-derived}.

\subsubsection{Case $k,s$ even}
\label{subsubsec-even}

We first determine bases of the ideals $T_1(A^{k,s}_c)^{\perp}$.
Again, a direct calculation yields that a basis
for the orthogonal
space $T_1(A^{k,s}_c)^{\perp}$ is given by
$$\mathcal{T}^{\perp}:= \mathcal{N}' \cup
\{(\alpha\beta\gamma)^{i}+(\beta\gamma\alpha)^i+
(\gamma\alpha\beta)^i, (\alpha\beta\gamma)^k, \eta^j\,:\,
k/2+1\le i\le k-1,~s/2+1\le j\le s \}
$$
where
\begin{equation} \label{eq-Nperp}
\mathcal{N}':=
\left\{ \begin{array}{ll}
\{\eta^{s/2}+(\alpha\beta\gamma)^{k/2}+(\beta\gamma\alpha)^{k/2}+
(\gamma\alpha\beta)^{k/2}\} & \mbox{if $c=0$} \\
\{\eta^{s/2} +(\alpha\beta\gamma)^{k/2}+(\beta\gamma\alpha)^{k/2}+
(\gamma\alpha\beta)^{k/2} +
(\beta\gamma\alpha)^{k-1}\beta\gamma\} & \mbox{if $c=1$}
\end{array}
\right.
\end{equation}
As in Subsection \ref{subsubsec-oddeven},
we consider
the factor rings $\overline{Z}_c:=Z_c/T_1(A^{k,s}_c)^{\perp}$,
where $Z_c:=Z(A^{k,s}_c)$.
A basis of $\overline{Z}_c$ is given
by the cosets of the following central elements
$$\{ 1, (\alpha\beta\gamma)^i+(\beta\gamma\alpha)^i+
(\gamma\alpha\beta)^i, \eta^j, (\beta\gamma\alpha)^{k-1}\beta\gamma,
\eta^{s/2}\,:\, 1\le i\le k/2 -1,~1\le j\le s/2-1\}.
$$
Note that this basis is independent of the scalar $c$.

In order to show that these factor rings are not isomorphic,
we consider their Jacobson radicals
$\overline{J}_c:=\rad(\overline{Z}_c)$.
Clearly, a basis for $\overline{J}_c$ is obtained from the above
basis of $\overline{Z}_c$ by removing the unit element $1$.

The crucial observation now is that for $c=1$, it follows
from (\ref{eq-Nperp}) that in
$\overline{Z}_c$ we have
\begin{equation} \label{eq-Z1}
(\beta\gamma\alpha)^{k-1}\beta\gamma =
\eta^{s/2} + (\alpha\beta\gamma)^{k/2}+(\beta\gamma\alpha)^{k/2}+
(\gamma\alpha\beta)^{k/2}.
\end{equation}
On the other hand, if $c=0$, there is no relation whatsoever
in $\overline{Z}_0$ involving $(\beta\gamma\alpha)^{k-1}\beta\gamma$.

By assumption we have
that $k>2$ and $s>2$. Then equation (\ref{eq-Nperp})
implies that $(\beta\gamma\alpha)^{k-1}\beta\gamma
\in \overline{J}_1/\overline{J}^2_1$. Hence, for $c=1$
the space $\overline{J}_1/\overline{J}^2_1$ has dimension 2,
spanned by the cosets of $\eta$ and $\alpha\beta\gamma+
\beta\gamma\alpha+\gamma\alpha\beta$.
On the other hand, for $c=0$ the space
$\overline{J}_0/\overline{J}^2_0$ has dimension 3,
spanned by the cosets of $\eta$, $\alpha\beta\gamma+
\beta\gamma\alpha+\gamma\alpha\beta$ and
$(\beta\gamma\alpha)^{k-1}\beta\gamma$.

Hence the factor rings $\overline{Z}_0$ and
$\overline{Z}_1$ are not isomorphic. In particular, the algebras
$A^{k,s}_0$ and $A^{k,s}_1$ are not derived equivalent.

\section{Algebras of semidihedral type}
\label{sec-semidihedral}

Algebras of semidihedral type have been defined by Erdmann.
An algebra $A$
(over an algebraically closed field) is said to be {\em of
semidihedral type} if it satisfies the following conditions:
\begin{itemize}
\item[{(i)}] $A$ is symmetric and indecomposable.
\item[{(ii)}] The Cartan matrix of $A$ is non-singular.
\item[{(iii)}] The stable Auslander-Reiten quiver of $A$ has
the following components: tubes of rank at most 3, at most one 3-tube,
and non-periodic components isomorphic to
$\mathbb{Z}\mathbb{A}_{\infty}^{\infty}$ and
$\mathbb{Z}\mathbb{D}_{\infty}$.
\end{itemize}
Note that the original definition in \cite[VIII.1]{Erdmann}
contains the additional requirement that $A$ should be of tame
representation type. It has been shown by the
first author \cite[thm. 6.1]{H-derived} that tameness
already follows from the properties given in the above definition.

K. Erdmann gave a classification of algebras of semidihedral type
up to Morita equivalence.
A derived equivalence classification has been given
in \cite[sec.4]{H-derived}. It turns out that every
algebra of semidihedral type is derived equivalent to an algebra
in one of the two following families.

For any integers $k\ge 1$, $t\ge 2$ and a scalar $c\in\{0,1\}$
define the algebra $A^{k,t}_c=SD(2\mathcal{B})_1^{k,t}(c)$ by
the quiver
\smallskip


\begin{center}
 \includegraphics[scale=0.6]{quiver2b.eps}
\end{center}


subject to the relations
$$\gamma\beta=0, \eta\gamma=0, \beta\eta=0,
\alpha^2=(\beta\gamma\alpha)^{k-1}\beta\gamma+c(\alpha\beta\gamma)^k,
\eta^t=(\gamma\alpha\beta)^k,
(\alpha\beta\gamma)^k=(\beta\gamma\alpha)^k.$$

Secondly, for any $k\ge 1$, $t\ge 2$ such that $k+t\ge 4$ and
$c\in\{0,1\}$ we define the algebras
$B^{k,t}_c=SD(2\mathcal{B})_2^{k,t}(c)$ by the same quiver as
above, subject to the relations
$$\beta\eta=(\alpha\beta\gamma)^{k-1}\alpha\beta,
\gamma\beta=\eta^{t-1}, \eta\gamma=(\gamma\alpha\beta)^{k-1}\gamma\alpha,
\beta\eta^2=0, \eta^2\gamma=0, \alpha^2=c(\alpha\beta\gamma)^k.$$

We remark that every block of a finite group with semidihedral
defect group of order $2^n$ ($n\ge 4$)
and two simple modules is derived equivalent to
one of the following algebras:
$SD(2\mathcal{B})_1^{1,2^{n-2}}(c)$ or to
$SD(2\mathcal{B})_2^{2,2^{n-2}}(c)$ where
the scalar $c$ is either $0$ or $1$ and $p=2$. 

\subsection{The algebras $A^{k,t}_c$}

We first consider the algebras $A^{k,t}_c$ defined above.
The aim of this section is to prove Theorem \ref{thm1-intro-semidihedral},
distinguishing these algebras for different scalars up to derived
equivalence.

To this end, we are going to study the sequence of
generalized Reynolds ideals
$$\mathcal{Z}_c\,:\,Z(A^{k,t}_c) \supseteq T_1(A^{k,t}_c)^{\perp}
\supseteq T_2(A^{k,t}_c)^{\perp}
\supseteq \ldots \supseteq T_r(A^{k,t}_c)^{\perp} \supseteq \soc(A^{k,t}_c)$$
of the center.
\medskip

Let us compare the algebras $A^{k,t}_c$ of semidihedral type
defined above with the corresponding algebras
$A^{k,s}_c=D(2\mathcal{B})^{k,s}(c)$ of dihedral type
considered in Section \ref{sec-dihedral}. These algebras
are defined by the same quiver, and the only difference in
the relations is that now in the semidihedral case we have that
$\alpha^2=(\beta\gamma\alpha)^{k-1}\beta\gamma+c(\alpha\beta\gamma)^k$,
whereas we had
$\alpha^2=c(\alpha\beta\gamma)^k$ in the dihedral case.
Note that the new summand occurring,
$$(\beta\gamma\alpha)^{k-1}\beta\gamma=
[(\beta\gamma\alpha)^{k-1}\beta,\gamma]
$$
is a commutator in $A^{k,t}_c$ (using that $\gamma\beta=0$).
This actually means that the proof in the dihedral case given in
Subsections \ref{subsec-basis} - \ref{subsec-proof-dihedral}
carries over {\em verbatim} to the
algebras $A^{k,t}_c$ of semidihedral type.
We will therefore not repeat it.


\subsection{The algebras $B^{k,t}_c$}

We now consider the second family of algebras
$B^{k,s}_c=SD(2\mathcal{B})_2^{k,t}(c)$ where
$k\ge 1$, $t\ge 2$ such that $k+t\ge 4$ and $c\in\{0,1\}$.
The sequence of generalized Reynolds ideals takes the form
$$\mathcal{Z}_c\,:\,Z(B_c) \supseteq T_1(B_c)^{\perp} \supseteq
T_2(B_c)^{\perp}
\supseteq \ldots \supseteq T_r(B_c)^{\perp} \supseteq
\soc(B_c).$$

We can not distinguish the algebras completely, but we shall
prove the following partial result.

\begin{theorem} \label{thm-SD2-odd}
Suppose $k\ge 1$ and $t\ge 3$ are both odd. Then the spaces
$T_1(B^{k,t}_0)$ and $T_1(B^{k,t}_1)$ have different dimensions.

In particular, the algebras $B^{k,t}_0$ and $B^{k,t}_1$
are not derived equivalent.
\end{theorem}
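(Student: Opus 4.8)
The plan is to follow the strategy of Subsections \ref{subsec-basis}--\ref{subsec-proof-dihedral} for the dihedral algebras, and in particular the argument of the case ``$k,s$ odd'': I would show that the spaces $T_1(B^{k,t}_0)$ and $T_1(B^{k,t}_1)$ have different dimensions, and then conclude by \cite[Theorem 1]{Z}, using that $\dim B^{k,t}_c$ and $\dim Z(B^{k,t}_c)=\dim\HH^0(B^{k,t}_c)$ are derived invariants, and that non-degeneracy of the symmetrizing form turns $\dim T_1$ into a derived invariant (exactly as in the ``$k,s$ odd'' case of \ref{subsec-proof-dihedral}).

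First I would write down explicit $F$-bases, as sets of paths, of $B^{k,t}_c$, of its centre $Z(B^{k,t}_c)$, and --- using that $B^{k,t}_c$ is symmetric, so that $\dim K(B^{k,t}_c)=\dim B^{k,t}_c-\dim Z(B^{k,t}_c)$ --- of the commutator space $K(B^{k,t}_c)$, all of which should turn out to be independent of $c$, as in the dihedral case. The essential point is the ``socle chain'' produced by the entangled relations $\gamma\beta=\eta^{t-1}$, $\beta\eta=(\alpha\beta\gamma)^{k-1}\alpha\beta$ and $\eta\gamma=(\gamma\alpha\beta)^{k-1}\gamma\alpha$: from these one gets $\eta^{t-1}=\gamma\beta\equiv\beta\gamma\pmod{K}$, $\eta^{t}=\eta\cdot\eta^{t-1}=\eta\gamma\beta=(\gamma\alpha\beta)^{k}$, and, via the commutator identity $[\gamma,\alpha\beta(\gamma\alpha\beta)^{k-1}]=(\gamma\alpha\beta)^k-(\alpha\beta\gamma)^k$, also $(\gamma\alpha\beta)^k\equiv(\alpha\beta\gamma)^k\equiv(\beta\gamma\alpha)^k\pmod{K}$. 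So modulo $K$ the two socle elements $(\alpha\beta\gamma)^k$ and $\eta^t$ agree, and a basis of $B^{k,t}_c/K(B^{k,t}_c)$ is given by the cosets of $e_1,e_2$ together with a chain $\alpha,\alpha\beta\gamma,\ldots,(\alpha\beta\gamma)^{k-1}$ and a chain $\eta,\ldots,\eta^{t}$ (the precise list to be pinned down by the dimension count).

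Next, mimicking Lemma \ref{lem-T-dihedral}, I would compute the square modulo $K$ of a general element $\lambda=a_0\alpha+\sum_{i=1}^{k-1}a_i(\alpha\beta\gamma)^i+\sum_{j=1}^{t}b_j\eta^j$ (idempotents being irrelevant): in characteristic $2$ this equals $a_0^2\alpha^2+\sum a_i^2(\alpha\beta\gamma)^{2i}+\sum b_j^2\eta^{2j}$ modulo $K$, each summand being rewritten in the chosen complement, where $(\alpha\beta\gamma)^{2i}$ and $\eta^{2j}$ vanish once the exponent exceeds the socle degree. Since $\alpha^2=c(\alpha\beta\gamma)^k\equiv c\,\eta^t\pmod{K}$, the coefficient $a_0$ enters the resulting linear conditions only for $c=1$; and because $k$ and $t$ are both odd there is no path $(\alpha\beta\gamma)^i$ with $2i=k$ and no path $\eta^j$ with $2j=t$, so the socle element $\eta^t\equiv(\alpha\beta\gamma)^k$ is not a square modulo $K$ of any basis path. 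Hence for $c=1$ the condition $\lambda^2\in K$ forces $a_0=0$ in addition to the conditions already present for $c=0$ (where $\alpha^2=0\in K$ and $a_0$ stays free), giving $\dim T_1(B^{k,t}_0)=\dim T_1(B^{k,t}_1)+1$, whence the conclusion.

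The main obstacle is the first step: unlike the dihedral $A^{k,s}_c$, whose relations are monomial apart from $\alpha^2$, the relations of $B^{k,t}_c$ genuinely mix the two loops, so determining the correct explicit bases of $B^{k,t}_c$, $Z(B^{k,t}_c)$ and $K(B^{k,t}_c)$ and, above all, correctly locating all the squares $(\alpha\beta\gamma)^{2i}$ and $\eta^{2j}$ modulo $K$ along the (longer) socle chain requires careful bookkeeping. Once this is done everything reduces to linear algebra over $F$ in characteristic $2$, and the hypothesis that $k$ and $t$ are both odd is used exactly to ensure that the socle element contributed by $\alpha^2$ when $c=1$ cannot be absorbed into the commutator space.
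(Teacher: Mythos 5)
Your proposal is correct and follows essentially the same route as the paper: explicit bases for $B^{k,t}_c$, $Z(B^{k,t}_c)$ and $K(B^{k,t}_c)$ (all independent of $c$), then squaring a general element $\lambda$ modulo $K(B^{k,t}_c)$ in characteristic $2$ and observing that, since $k$ and $t$ are odd, no basis path squares to the socle class $(\alpha\beta\gamma)^k\equiv\eta^t$, so the relation $\alpha^2=c(\alpha\beta\gamma)^k$ forces $a_0=0$ exactly when $c=1$, giving $\dim T_1(B^{k,t}_0)=\dim T_1(B^{k,t}_1)+1$. This is precisely the content of the paper's Lemma \ref{lem-T-SD2} and the subsequent dimension argument.
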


\subsubsection{Bases for the algebras}
We fix integers $k\ge 1$ and $t\ge 2$ such that $k+t\ge 4$
(not necessarily both odd).
The algebras $B^{k,t}_c$ have dimension $9k+t$, the Cartan
matrix has the form (cf. \cite{Erdmann})
$$\left( \begin{array}{cc}
4k & 2k \\ 2k & k+t \end{array}
\right).
$$
A basis of the algebras, consisting of non-zero paths in the
quiver, is given by the union
$$\mathcal{B} := \mathcal{B}_{1,1}\cup \mathcal{B}_{1,2}
\cup \mathcal{B}_{2,1}\cup \mathcal{B}_{2,2},$$
where
$$\mathcal{B}_{1,1}:=\{e_1, (\alpha\beta\gamma)^{i},
\alpha, (\alpha\beta\gamma)^{i}\alpha,
(\beta\gamma\alpha)^{i},\beta\gamma,(\beta\gamma\alpha)^{i}\beta\gamma,
(\alpha\beta\gamma)^k=(\beta\gamma\alpha)^k=\beta\eta\gamma
\,:\,1\le i\le k-1\}
$$
$$\mathcal{B}_{1,2}:=\{\beta,(\beta\gamma\alpha)^{i}\beta,
\alpha\beta,(\alpha\beta\gamma)^{i}\alpha\beta\,:\,1\le i\le k-1\}
$$
$$\mathcal{B}_{2,1}:=\{\gamma,
(\gamma\alpha\beta)^{i}\gamma,
\gamma\alpha,
(\gamma\alpha\beta)^{i}\gamma\alpha\,:\,1\le i\le k-1\}
$$
$$\mathcal{B}_{2,2}:=\{e_2, (\gamma\alpha\beta)^{i},
\eta,\ldots, \eta^{t-2}, \eta^{t-1}=\gamma\beta,
\eta^t=(\gamma\alpha\beta)^k=\eta\gamma\beta
\,:\,1\le i\le k-1\}.
$$

\subsubsection{The centers}
\label{subsec-center-SD2}
The center of $B^{k,t}_c$ has dimension $k+t+2$ (cf. \cite{Erdmann}), a basis of
the center $Z(B^{k,t}_c)$ is given by
$$\mathcal{Z}:=
\{1,(\alpha\beta\gamma)^{i}+(\beta\gamma\alpha)^{i}+
(\gamma\alpha\beta)^{i}, (\beta\gamma\alpha)^{k-1}\beta\gamma,
(\alpha\beta\gamma)^k,
\eta^j, \eta+(\alpha\beta\gamma)^{k-1}\alpha
\,:\,1\le i\le k-1,\, 2\le j\le s\}.
$$
Note that this basis is also independent of the scalar $c$.

\subsubsection{The commutator spaces}
\label{subsec-comm-SD2}
The algebras $B^{k,t}_c$ are symmetric, so
the commutator space $K(B^{k,t}_c)$ has dimension
$$\dim K(B^{k,t}_c) = \dim B^{k,t}_c - \dim Z(B^{k,t}_c) =
9k+t - (k+t+2) = 8k-2.$$
A basis
of $K(B^{k,t}_c)$ is given by the union
$$\mathcal{K}:=\mathcal{B}_{1,2}\cup \mathcal{B}_{2,1}\cup
\mathcal{K}_{1,1}\cup \mathcal{K}_{2,2}$$
where $\mathcal{B}_{1,2}$ and $\mathcal{B}_{2,1}$
have been defined above and where
$$\mathcal{K}_{1,1}:=\{\beta\gamma+\gamma\beta,
(\alpha\beta\gamma)^{i}+(\beta\gamma\alpha)^{i},
(\alpha\beta\gamma)^{i}\alpha,
(\beta\gamma\alpha)^{i}
\beta\gamma\,:\, 1\le i\le k-1\}
$$
and
$$\mathcal{K}_{2,2}:=\{(\alpha\beta\gamma)^i+(\gamma\alpha\beta)^i,
\,:\,1\le i\le k\}.
$$

\subsubsection{The spaces $T_1$} We now consider the spaces
$$T_1(B^{k,t}_c):=\{ x\in B^{k,t}_c\,:\,x^2\in K(B^{k,t}_c)\}.$$
The commutator space is always contained in
$T_1$ \cite[eq.\,(16)]{Ku2}.
A basis for $K(B^{k,t}_c)$ was given in
Section \ref{subsec-comm-SD2}.
A basis of $B^{k,t}_c/K(B^{k,t}_c)$ is given by the cosets of the
following paths
\begin{equation} \label{basisA/KA-semi}
\{e_1,e_2,\alpha,
\alpha\beta\gamma,\ldots,(\alpha\beta\gamma)^{k-1},
\eta,\ldots,\eta^{t-1},\eta^t\}.
\end{equation}
We now turn to the
the spaces $T_1(B^{k,t}_c)$.
It turns out that they depend on the parity of $k$ and $s$
(and on the scalar $c$).

From now on, we assume that $k$ and $t$ are both odd.

Recall that we denoted the above
basis of the commutator space by $\mathcal{K}$.

\begin{lemma}
\label{lem-T-SD2}
Let $k\ge 1$ and $t\ge 3$ be both odd.
A basis of $T_1(B^{k,t}_c)$ is given by the union
$$\mathcal{T}:= \mathcal{K}\cup
\{(\alpha\beta\gamma)^{\frac{k+1}{2}},\ldots,
(\alpha\beta\gamma)^{k-1},\eta^{\frac{t+1}{2}},\ldots,
\eta^t\}\,\cup\,\mathcal{N}
$$
where the set $\mathcal{N}$ is equal to
$$\left\{
\begin{array}{ll}
\{\alpha\} & \mbox{{\em if $c=0$}} \\
\emptyset  & \mbox{{\em if $c=1$}} \\
\end{array}
\right.
$$
\end{lemma}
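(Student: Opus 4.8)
The strategy is to mimic the dihedral computation (Lemma \ref{lem-T-dihedral} and its proof) in this slightly modified setting. We work modulo $K(B^{k,t}_c)$, using the basis \eqref{basisA/KA-semi} of $B^{k,t}_c/K(B^{k,t}_c)$, so a general element whose square we must test lies (modulo the commutator and disregarding the idempotents) in the span of $\alpha,\alpha\beta\gamma,\ldots,(\alpha\beta\gamma)^{k-1},\eta,\ldots,\eta^t$. Write such an element as
$$
\lambda = a_0\alpha + \sum_{i=1}^{k-1}a_i(\alpha\beta\gamma)^i
        + \sum_{j=1}^{t}b_j\eta^j .
$$
Since $\operatorname{char}F=2$, in the square only the "diagonal" terms survive: $\lambda^2 \equiv a_0^2\alpha^2 + \sum_i a_i^2(\alpha\beta\gamma)^{2i} + \sum_j b_j^2\eta^{2j} \pmod{K(B^{k,t}_c)}$. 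The only genuine difference with the dihedral case is the relation $\alpha^2=(\beta\gamma\alpha)^{k-1}\beta\gamma+c(\alpha\beta\gamma)^k$ in place of $\alpha^2=c(\alpha\beta\gamma)^k$, and the point already noted in the text is that the extra summand $(\beta\gamma\alpha)^{k-1}\beta\gamma=[(\beta\gamma\alpha)^{k-1}\beta,\gamma]$ is a commutator (using $\gamma\beta=0$), hence lies in $K(B^{k,t}_c)$. Therefore $\alpha^2 \equiv c(\alpha\beta\gamma)^k \pmod{K(B^{k,t}_c)}$, exactly as in the dihedral case.

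\textbf{Carrying it out.} With $k$ and $t$ both odd, one has $2i\ge k+1$ for $i\ge \tfrac{k+1}{2}$ — and since $(\alpha\beta\gamma)^m=0$ for $m>k$ while $(\alpha\beta\gamma)^k$ is already recorded in $\mathcal{K}_{2,2}$-type relations (indeed $(\alpha\beta\gamma)^k+(\gamma\alpha\beta)^k\in K$ but $(\alpha\beta\gamma)^k$ itself is \emph{not} in $K$) — the powers $(\alpha\beta\gamma)^{(k+1)/2},\ldots,(\alpha\beta\gamma)^{k-1}$ contribute squares that vanish or land in $K$, so these elements automatically belong to $T_1$; similarly $\eta^{(t+1)/2},\ldots,\eta^t$ all lie in $T_1$, since $\eta^{2j}=0$ for $2j>t$ except for $\eta^t=(\gamma\alpha\beta)^k$, whose square is $0$ and which is itself not needed as a constraint. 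The surviving constraints come from the low powers: $\lambda^2\in K$ forces $a_i=0$ for $1\le i\le \tfrac{k-1}{2}$ and $b_j=0$ for $1\le j\le\tfrac{t-1}{2}$ (each $(\alpha\beta\gamma)^{2i}$ with $2i\le k-1$, resp. $\eta^{2j}$ with $2j\le t-1$, is a basis element of $A/K$ and so cannot be cancelled), and finally the $\alpha^2$-term gives $a_0^2 c(\alpha\beta\gamma)^k\in K$, i.e. $a_0=0$ when $c=1$ (since $(\alpha\beta\gamma)^k\notin K$) and no condition on $a_0$ when $c=0$. Translating: for $c=0$ the coset of $\alpha$ joins $\mathcal{T}$, giving $\mathcal{N}=\{\alpha\}$; for $c=1$ it does not, giving $\mathcal{N}=\emptyset$. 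Note there are no "mixed" elements $\alpha+(\alpha\beta\gamma)^{\lfloor k/2\rfloor}$ or the like here precisely because $k$ and $t$ odd means $\lfloor k/2\rfloor=\tfrac{k-1}{2}<\tfrac{k+1}{2}$, so $(\alpha\beta\gamma)^{(k-1)/2}$ appears with a forced-zero coefficient and cannot be combined with $\alpha$; this is what makes the odd/odd case cleaner than the general dihedral statement.

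\textbf{Conclusion and the main point.} Counting: $\mathcal{T}=\mathcal{K}\cup\{(\alpha\beta\gamma)^{(k+1)/2},\ldots,(\alpha\beta\gamma)^{k-1},\eta^{(t+1)/2},\ldots,\eta^t\}\cup\mathcal{N}$, and the two sets differ in dimension by exactly one according to whether $c=0$ or $c=1$. Since $\dim T_1(B^{k,t}_c)=\dim B^{k,t}_c-\operatorname{codim} T_1$ and the codimension equals $\dim K + \#(\text{new basis cosets})$, the equality $\dim T_1(B^{k,t}_0)=\dim T_1(B^{k,t}_1)+1$ follows. As the dimension of $T_1(B^{k,t}_c)$ is a derived invariant — the center has fixed dimension and the symmetrizing form is non-degenerate, so $\dim T_1^{\perp}$ and hence $\dim T_1$ is invariant by Proposition \ref{prop:zimmermann} — the algebras $B^{k,t}_0$ and $B^{k,t}_1$ cannot be derived equivalent. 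The only step requiring any care is verifying that $(\alpha\beta\gamma)^k\notin K(B^{k,t}_c)$ and that none of the relevant squares can be "hidden" inside the commutator space after all — i.e. that the basis $\mathcal{K}$ of Section \ref{subsec-comm-SD2} really does capture $K$ exactly — but this is guaranteed by the dimension count $\dim K=8k-2$ recorded there together with the explicit basis; once that is in hand the rest is bookkeeping parallel to the dihedral proof.
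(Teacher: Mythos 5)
Your computation follows the paper's own proof essentially verbatim: square a general representative modulo $K(B^{k,t}_c)$ in characteristic $2$, read the resulting coefficient conditions off against the basis (\ref{basisA/KA-semi}) of $B^{k,t}_c/K(B^{k,t}_c)$, and translate condition (ii) on $a_0$ into the set $\mathcal{N}$; the conclusion and the case distinction on $c$ are correct. One slip worth fixing: you justify $\alpha^2\equiv c(\alpha\beta\gamma)^k \pmod{K(B^{k,t}_c)}$ by invoking the relation $\alpha^2=(\beta\gamma\alpha)^{k-1}\beta\gamma+c(\alpha\beta\gamma)^k$ together with the commutator identity $(\beta\gamma\alpha)^{k-1}\beta\gamma=[(\beta\gamma\alpha)^{k-1}\beta,\gamma]$, which requires $\gamma\beta=0$ --- but these are the relations of the \emph{first} semidihedral family $SD(2\mathcal{B})_1^{k,t}(c)$, not of $B^{k,t}_c=SD(2\mathcal{B})_2^{k,t}(c)$, where $\gamma\beta=\eta^{t-1}\neq 0$ and the defining relation is already $\alpha^2=c(\alpha\beta\gamma)^k$. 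So the congruence you need holds for a more direct reason, and the rest of your argument (including the observation that $(\alpha\beta\gamma)^k\equiv\eta^t\notin K$ forces $a_0=0$ when $c=1$) goes through unchanged.
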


\proof
Since the commutator space is
contained in $T_1(B^{k,t}_c)$ \cite[eq.\,(16)]{Ku2}, it
remains to consider
the basis of $B^{k,t}_c/K(B^{k,t}_c)$ given in
(\ref{basisA/KA-semi}). So we consider a linear combination
$$\lambda := a_0\alpha + a_1(\alpha\beta\gamma)+\ldots
+a_{k-1}(\alpha\beta\gamma)^{k-1} + b_1\eta +
\ldots + b_{t-1}\eta^{t-1} + b_t\eta^t,$$
where $a_i,b_j\in F$, and we have to determine when
$\lambda^2\in K(B^{k,t}_c)$.
By assumption $k$ and $t$ are odd, so we get
$$\lambda^2 = a_0^2\alpha^2 + \ldots + a_{\frac{k-1}{2}}^2
(\alpha\beta\gamma)^{k-1} + b_1^2\eta^2 +
\ldots + b_{\frac{t-1}{2}}^2 \eta^{t-1}\;\;(\mbox{mod }K(B_c^{k,t}))
$$
(recall that we are working in characteristic 2).
A basis for the commutator space has been given
in \ref{subsec-comm-SD2}.
From this we can deduce that $\lambda^2\in K(B^{k,t}_c)$ if
and only if
the following conditions are satisfied:
\begin{itemize}
\item[{(i)}] $a_1=\ldots = a_{\frac{k-1}{2}} = 0$ and
$b_1=\ldots = b_{\frac{t-1}{2}} = 0$,
\item[{(ii)}] if $c=1$, also $a_0=0$.
\end{itemize}
\smallskip
From these conditions, the claim of the lemma follows directly.
\qed

\begin{remark}\rm
In case $k$ is even, in the square of $\lambda$ above a term $(\alpha\beta\gamma)^k$
appears and analogously to Lemma~\ref{lem-T-dihedral} it becomes impossible to
distinguish the parameters $c$ just by the dimensions of the generalized
Reynolds ideals. Similar phenomena appear for $t$ even.
\end{remark}

\subsubsection{Proof of Theorem \ref{thm-SD2-odd}}
From Lemma \ref{lem-T-SD2}, we deduce that the spaces
$T_1(B^{k,t}_0)$ and $T_1(B^{k,t}_1)$ have different
dimensions. But these dimensions are invariant
under derived equivalences, thus proving Theorem
\ref{thm-SD2-odd}.
\qed


\end{document}